\providecommand{\B}{}
\renewcommand{\B}{\bm}
\newcommand{\D}{\partial}
\renewcommand{\le}{\leqslant}
\renewcommand{\ge}{\geqslant}
\newtheorem{theorem}{Theorem}
\newtheorem{lemma}[theorem]{Lemma}
\newtheorem{proposition}[theorem]{Proposition}
\newtheorem{corollary}[theorem]{Corollary}
\newtheorem{remark}{Remark}
\theoremstyle{definition}
\def\newhbar{\raise0.7ex \hbox{--}\mkern-9mu \hbox{$h$}}
\begin{document}

\title{Convergence rate of the spectral difference method on regular triangular meshes}

\author{P.A.~Bakhvalov}

\date{April 16, 2024}

\maketitle

\begin{abstract}
We consider the spectral difference method based on the $p$-th order Raviart~-- Thomas space ($p = 1, 2, 3$) on regular triangular meshes for the scalar transport equation. The solution converges with the order $p$ if the transport velocity is parallel to a family of mesh edges and with the order $p+1$ otherwise. We prove this fact for \mbox{$p=1$} and show it for $p=1, 2, 3$ in numerical experiments.
\end{abstract}

\medskip

\sloppy

\section{Introduction}

The spectral difference (SD) method is a high order method for solving hyperbolic problems on unstructured meshes. Like in the discontinuous Galerkin (DG) method, a mesh function is a discontinuous piecewise $p$-th order polynomial function, and a Riemann solver is used to calculate the numerical fluxes. The SD method attracts attention because of its simplicity compared to DG. Its main drawback is that SD is not based on a solid mathematical background. In particular, there is no $L_2$-stability proof on unstructured meshes.

The spectral difference method was proposed for unstructured triangular meshes in \cite{Liu2006, SD1} and for quadrilateral meshes in \cite{SD2,SD5,SD3,SD4}. The stability in the 1D case was proved in \cite{SD6}. However, it was found that for $p \ge 2$ this scheme is unstable even on regular-triangular meshes \cite{Abelee2008}. This issue was overcome by the spectral difference method based on the Raviart~-- Thomas space (SD-RT), which was proposed for $p \le 3$ in \cite{SD8}. It was generalized to $p \le 6$ in \cite{Veilleux2021}, to tetrahedral meshes in \cite{Veilleux2022}, and to mixed-element 3D meshes in \cite{SaezMischlich2023}.

In this paper we study the accuracy of the SD-RT method for the Cauchy problem for the transport equation
\begin{equation}
\begin{gathered}
\frac{\D v}{\D t} + \B{\omega} \cdot \nabla v = 0, \quad \B{r} \in \mathbb{R}^2, \quad 0 < t < t_{\max},
\\
v(0,\B{r}) = v_0(\B{r}).
\label{eq_TE}
\end{gathered}
\end{equation}
The transport velocity \mbox{$\B{\omega}=  (\omega_x, \omega_y)^T$} is constant, and the initial data $v_0$ is sufficiently smooth and periodic with the periodic cell $(0,1)^2$.

The convergence rate $P$ in a quadratic norm of a stable $p$-exact numerical method for \eqref{eq_TE} usually belongs to the range $[p, p+1]$. For finite-difference schemes, there holds $P = p$. On 1D non-uniform meshes, for both polynomial-based finite-volume schemes and the discontinous Galerkin method there holds $P = p+1$.

On unstructured meshes, the situation becomes more complicated. The DG($p$) method converges with the order $p+1/2$ under the minimal angle condition \cite{Johnson1986}. The $(p+1)$-th order convergence is known if the angle between the mesh edges and the transport velocity is bounded from below \cite{Richter1988}. The importance of this assumption was demonstrated by Peterson \cite{Peterson1991} who constructed a sequence of meshes where DG($p$) converges with the order $p+1/2$.

A similar effect holds for finite-volume schemes. On unstructured meshes, one usually observes a convergence rate close to $p+1$ \cite{Diskin2011}. But a lower convergence rate may happen. For instance, the convergence rates $3/2$ and $5/4$ were demostrated in a Peterson-type counterexample for 1-exact edge-based schemes  \cite{Bakhvalov2023b}.

Now we are concerned with the case of regular (also called translationally invariant, TI) triangular meshes, i. e. meshes that are invariant with respect to the translation by each edge. A triangular TI-mesh is the image of an infinite mesh of regular triangles mesh under a linear map.  Two adjacent triangles form a periodic cell of the mesh, so each scheme on a triangular TI-mesh can be interpreted as a scheme with several degrees of freedom per cell on a uniform structured mesh. For a given direction of the transport velocity, the convergence rate of such schemes may be either $P=p$ or $P=p+1$, see \cite{Bakhvalov2023}. The $(p+1)$-th order convergence of DG($p$) in this case is a corollary of the $(p+1/2)$-th order convergence. Finite-volume schemes also exhibit the $(p+1)$-th order convergence provided that the dissipation is good enough so the checkerboard function shown at Fig.~\ref{fig:checkerboard} is not a steady numerical solution.

In this paper we consider the SD-RT($p$) method for the transport equation on translationally invariant triangular meshes, $p = 1, 2, 3$. We show that the numerical solution converges with the order $p$ if the transport velocity is parallel to a family of mesh edges and with the order $p+1$ otherwise. We prove this effect for $p = 1$ and demonstrate it numerically for $p = 1, 2, 3$.

\begin{figure}[t]
\centering
\includegraphics[width=0.3\textwidth]{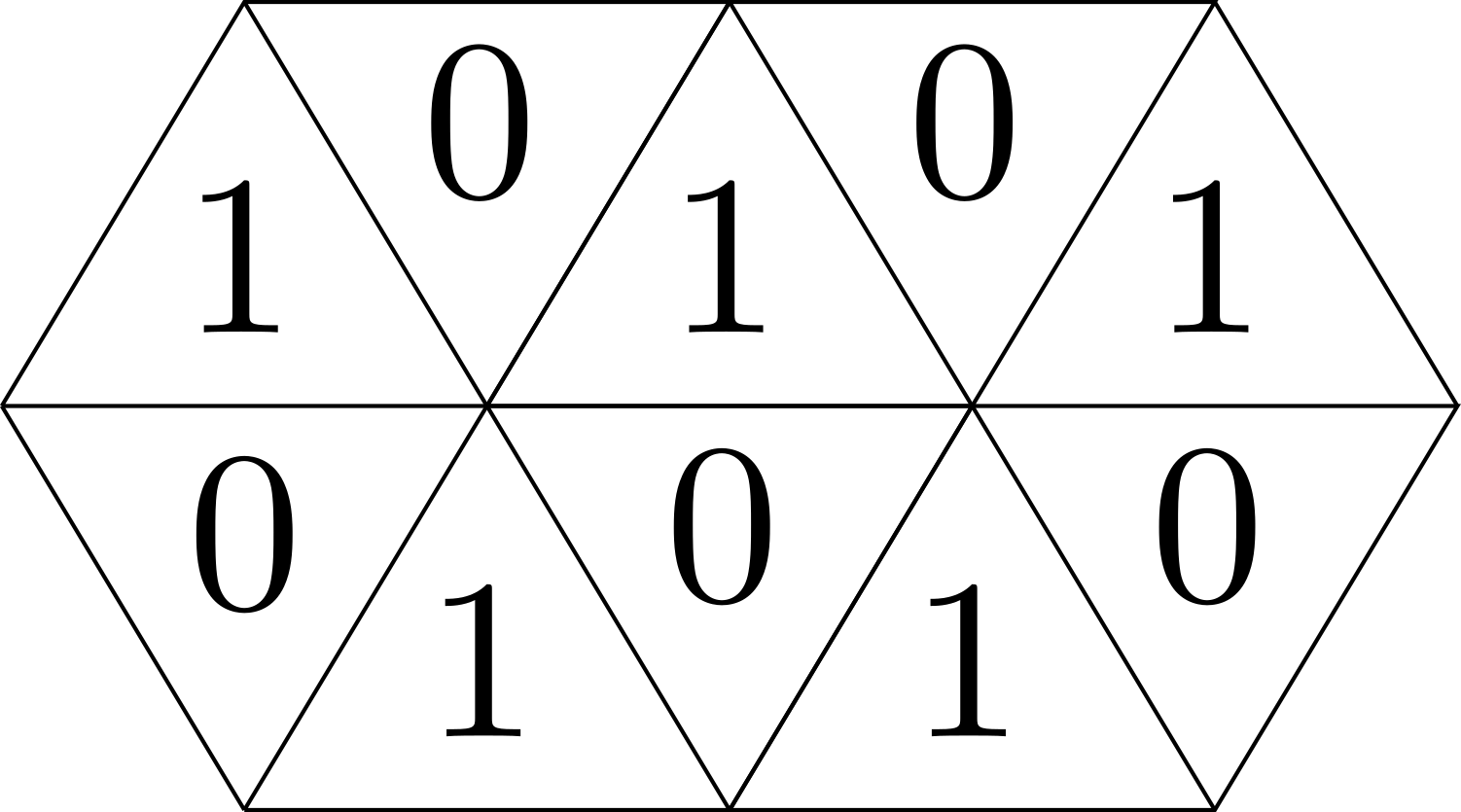}
\caption{The checkerboard function on a regular triangular mesh}\label{fig:checkerboard}
\end{figure}

\section{The SD-RT method}


\subsection{The Raviart~-- Thomas space}
\label{sect:RT}

Let $P_p$, $p \in \mathbb{N} \cup \{0\}$, be the space of $p$-th order polynomials of two variables. The Raviart~-- Thomas space of order $p$ is formally defined as 
$$
RT_p = P_p^2 + (x,y)^T P_p.
$$
In particular,
$$
RT_1 = span\left\{
\left(\begin{array}{c}1 \\ 0\end{array}\right),
\left(\begin{array}{c}x \\ 0\end{array}\right),
\left(\begin{array}{c}y \\ 0\end{array}\right),
\left(\begin{array}{c}0 \\ 1\end{array}\right),
\left(\begin{array}{c}0 \\ x\end{array}\right),
\left(\begin{array}{c}0 \\ y\end{array}\right),
\left(\begin{array}{c}x^2 \\ xy\end{array}\right),
\left(\begin{array}{c}xy \\ y^2\end{array}\right)
\right\}.
$$
Since $dim\, P_p = (p+1)(p+2)/2$, and the dimension of the space of homogeneous $p$-order polynomials is $p+1$, then $dim\,RT_p = (p+1)(p+3)$. The divergence operator maps $RT_p$ onto $P_p$. For each $\B{n} \in \mathbb{R}^2$ and $\B{f} \in RT_p$, the function $\B{f} \cdot \B{n}$ is a $p$-th order polynomial on each line orthogonal to $\B{n}$. For more information about the Raviart~-- Thomas space see \cite{Sayas2013}.

\subsection{SD-RT on a general triangular mesh}
\label{sect:22}

Consider a triangular mesh in $\mathbb{R}^2$, periodic with the periodic cell $(0,1)^2$. Let $\mathcal{E}$ be the set of mesh triangles. A {\it mesh function} is a generally discontinuous piecewise-polynomial function, namely, $u = \{u_e \in P_p, e \in \mathcal{E}\}$, periodic with the periodic cell $(0,1)^2$.

For each mesh edge, put $p+1$ points, for instance, at the knots of the Gauss~-- Legendre quadrature rule. On each triangle, define $p(p+1)/2$ {\it interior points}. Together, they form the set of {\it flux collocation points}. For $p=1$, on each $e \in \mathcal{E}$, there is one interior point at the mass center. For $p > 1$, the location of interior points is chosen to enforce stability of the resulting scheme \cite{SD8,Veilleux2021}, but there is no algorithm for a general $p$. 

By SD-RT($p$) we denote the spectral difference method based on the Raviart~-- Thomas space of order $p \in \mathbb{N} \cup \{0\}$. For the transport equation \eqref{eq_TE} it has the form
\begin{equation}
\frac{\D u_e}{\D t} + \mathrm{div}\,\B{f}_e[u] = 0,\quad e \in \mathcal{E}, \quad u_e(t) \in P_p,
\label{eq_02}
\end{equation}
where $\B{f}_e[u] \in RT_p$ is defined by the following conditions:
\begin{itemize}
\item for each interior collocation point $\B{r}_j$, $j = 1, \ldots, p(p+1)/2$, inside the triangle $e$ there holds
\begin{equation}
(\B{f}_e[u])(\B{r}_j) = \B{\omega} u(\B{r}_j);
\label{eq_SD}
\end{equation}
\item for each flux collocation point $\B{r}_j$ on $\D e$ there holds
\begin{equation}
(\B{f}_e[u])(\B{r}_j) \cdot \B{n} = F_n(\B{r}_j)
\label{eq_SD2}
\end{equation}
\end{itemize}
where $F_n(\B{r}_j)$ is defined by solving the Riemann problem:
$$
F_n(\B{r}_j) = (\B{\omega} \cdot \B{n}) \left\{\begin{array}{ll}
u(\B{r}_j + 0\B{n}), & \B{\omega} \cdot \B{n} < 0; \\
u(\B{r}_j - 0\B{n}), & \B{\omega} \cdot \B{n} \ge 0.
\end{array}\right.
$$
Here $\B{n}$ is the unit normal vector to the edge and $u(\B{r}_j \pm 0\B{n})$ are the limit values from both sides of the edge.

To define $\B{f}_e[u] \in RT_p$, we have $p(p+1)/2$ vector equations and $3(p+1)$ scalar equations, ${dim}\,RT_p$ in total. Thus, the number of equations is equal to the number of unknowns. The non-degeneracy of this system is provided by the location of the interior collocation points.

Unless specifically stated, the initial data for the semidiscrete problem are given by 
\begin{equation}
u(0) = \Pi_h v_0
\label{eq_id}
\end{equation}
where $\Pi_h$ is the Lagrangian mapping. Our results also hold if the orthogonal (in the sense of $L_2$) projection is used, see Remark~\ref{remark:2}.

By construction, SD-RT($p$) is $p$-exact in the sense of $\Pi_h$, i. e. for each $f \in P_p$ the function $u(t) = \Pi_h(g(t,\ \cdot\ ))$, $g(t, \B{r}) = f(\B{r} -\B{\omega}t)$, satisfies \eqref{eq_02}.


Note that the replacement of \eqref{eq_SD} by
$$
\int\limits_{e} (\B{f}_e[u])(\B{r}) \phi(\B{r}) d\B{r} = \int\limits_{e} \B{\omega} u(\B{r}) \phi(\B{r}) d\B{r}, \quad \forall \phi \in P_{p-1},
$$
yields the discontinuous Galerkin method. For $p=0$, there are no interior collocation points, so SD-RT(0) coincides with DG(0) and with the basic finite-volume method.

\subsection{SD-RT(1) on a right-triangular mesh}
\label{sect:regmesh}

Now we specify SD-RT(1) for a regular triangular mesh. Since an affine transform of the mesh together with the transport velocity keeps the scheme unmodified, we restrict our analysis to the meshes of right isosceles triangles.

Let the mesh nodes have radius-vectors $(mh, nh)^T$, $m, n \in \mathbb{Z}$, $1/h \in \mathbb{N}$, and the mesh edges connect each node $(mh, nh)^T$ to
$$
((m+1)h, nh)^T, \quad (mh, (n+1)h)^T, \quad ((m+1)h, (n-1)h)^T.
$$
The scheme coefficients are piecewise linear functions of $\B{\omega}$, and their gradients are discontinuous when $\B{\omega}$ is parallel to a mesh edge. Without loss, we consider the case $\omega_x, \omega_y \ge 0$, $\omega_x + \omega_y > 0$. 

\begin{figure}[t]
\centering
\includegraphics[width=0.2\textwidth]{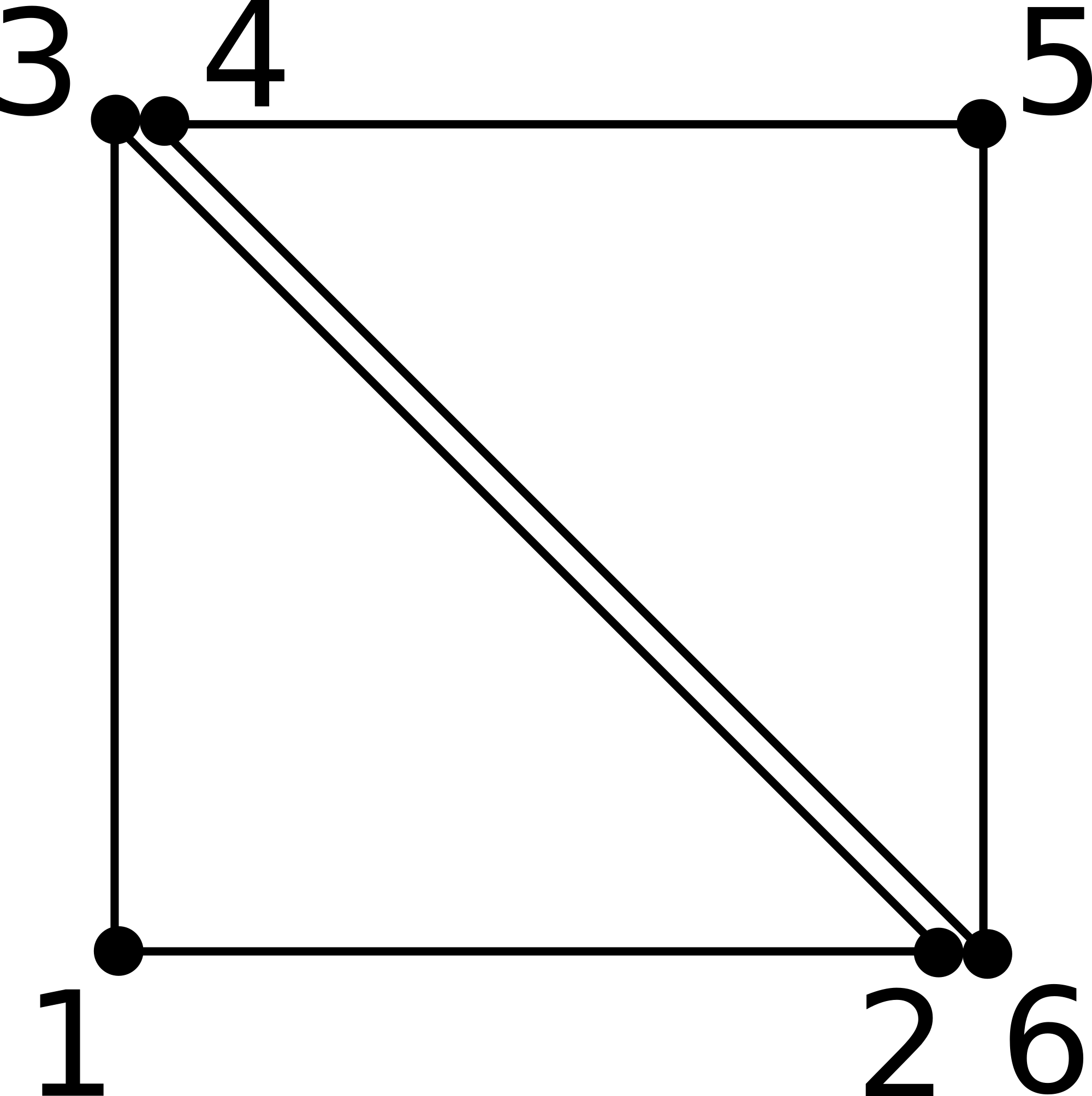}
\caption{Solution points}\label{fig:sol_points}
\end{figure}

Represent a mesh function on each triangle by its values at the vertices. Let  $\B{\eta} = (\eta_x, \eta_y) \in \mathbb{Z}^2$. The point values at the vertices on the periodic cell of the mesh
$$
[\eta_x h, (\eta_x+1)h] \times [\eta_y h, (\eta_y+1)h]
$$
are numerated as shown in Fig.~\ref{fig:sol_points}. Denote them by \mbox{$u_{\B{\eta}} \in \mathbb{C}^{6}$}. Then $\{u_{\B{\eta}}, \B{\eta} \in \mathbb{Z}^2\}$ represents a mesh function.

In this notation, SD-RT(1) on the right-triangular mesh with step $h$ takes the form
\begin{equation}
\frac{du_{\B{\eta}}}{dt}(t) +
\frac{1}{h}\sum\limits_{\B{\zeta} \in \mathcal{S}} L_{\B{\zeta}} u_{\B{\eta}+\B{\zeta}}(t) = 0, \quad
\B{\eta} \in \mathbb{Z}^2,
\label{eq3}
\end{equation}
where $\mathcal{S} = \{(0,0), (-1,0), (0,-1)\}$ is the scheme stencil,
$$
L_{\B{\zeta}} = \omega_x L_{\B{\zeta}}^x + \omega_y L_{\B{\zeta}}^y,
$$
$L_{\B{\zeta}}^x$ and $L_{\B{\zeta}}^y$ are real-valued $(6 \times 6)$-matrices.
The matrices $L_{\B{\zeta}}^x$ have the form $L_{(0,-1)}^x = 0$,
$$
L_{(0,0)}^x = \left(\begin{array}{cccccc}
3 & 1 & 1 & 0 & 0 & 0 \\
-3 & 1 & -2 & 0 & 0 & 0 \\
0 & 1 & 4 & 0 & 0 & 0 \\
0 & -1 & -4 & 3 & 1 & 1\\
0 & 2 & 2 & -3 & 1 & -2 \\
0 & -4 & -1 & 0 & 1 & 4
\end{array} \right), \quad
L_{(-1,0)}^x = \left(\begin{array}{cccccc}
0 & 0 & 0 & 0 & -1 & -4 \\
0 & 0 & 0 & 0 & 2 & 2 \\
0 & 0 & 0 & 0 & -4 & -1 \\
0 & 0 & 0 & 0 & 0 & 0 \\
0 & 0 & 0 & 0 & 0 & 0 \\
0 & 0 & 0 & 0 & 0 & 0
\end{array} \right).
$$
The matrix $L_{(\zeta_x, \zeta_y)}^y$ results from $L_{(\zeta_y, \zeta_x)}^x$ by the permutation of rows 2 and 3, columns 2 and 3, rows 4 and 6, colunms 4 and 6: $L_{(-1,0)}^y = 0$,
$$
L_{(0,0)}^y = \left(\begin{array}{cccccc}
3 & 1 & 1 & 0 & 0 & 0 \\
0 & 4 & 1 & 0 & 0 & 0 \\
-3 & -2 & 1 & 0 & 0 & 0 \\
0 & -1 & -4 & 4 & 1 & 0 \\
0 & 2 & 2  & -2 & 1 & -3 \\
0 & -4 & -1  & 1 & 1 & 3
\end{array} \right), \quad
L_{(0,-1)}^y = \left(\begin{array}{cccccc}
0 & 0 & 0 & -4 & -1 & 0 \\
0 & 0 & 0 & -1 & -4 & 0\\
0 & 0 & 0 & 2 & 2 & 0 \\
0 & 0 & 0 & 0 & 0 & 0 \\
0 & 0 & 0 & 0 & 0 & 0 \\
0 & 0 & 0 & 0 & 0 & 0
\end{array} \right).
$$
Taking the linear interpolation from \eqref{eq3} within each triangle returns us to the form  \eqref{eq_02}.

The Lagrangian mapping $\Pi_h$ takes each $v \in C(\mathbb{R}^2)$ to the function $\Pi_h v$ with the components
$$
(\Pi_h v)_{\B{\eta}, \xi} = v(\B{r}_{\B{\eta}, \xi}),\quad \B{\eta} \in \mathbb{Z}^2, \quad \xi = 1, \ldots, 6,
$$
where $\B{r}_{\B{\eta}, \xi} = (\B{\eta} + \B{a}_{\xi}) h$ is the radius-vector of solution point $\xi$ in block $\B{\eta}$. The vectors $\B{a}_{\xi}$ are based on Fig.~\ref{fig:sol_points}: $\B{a}_1 = (0,0)^T$, $\B{a}_2 = \B{a}_6 = (1,0)^T$, $\B{a}_3 = \B{a}_4 = (0,1)^T$, $\B{a}_5 = (1,1)^T$.

\section{Criterion of the $(p+1)$-th order convergence}

Schemes of the general form \eqref{eq3}, \eqref{eq_id} were studied in \cite{Bakhvalov2023}. We use the notation and some results from that paper. Throughout this section, $\B{\omega}$ is fixed.

Let $\B{m} = (m_x, m_y)$ be a multiindex: $m_x, m_y \in \mathbb{N} \cup \{0\}$. Denote $\vert \B{m} \vert = m_x + m_y$, $\B{r}^{\B{m}} = x^{m_x} y^{m_y}$, $\B{m}! = m_x!\,m_y!$. For $\vert \B{m} \vert = p+1$ denote
\begin{equation}
f^{\B{m}} = - \left(\Pi_1 (\B{\omega} \cdot \nabla) \frac{\B{r}^{\B{m}}}{\B{m}!}\right)_{0} + \sum\limits_{\B{\zeta} \in \mathcal{S}} L_{\B{\zeta}} \left(\Pi_1 \frac{\B{r}^{\B{m}}}{\B{m}!}\right)_{\B{\zeta}}.
\label{eq_trerr}
\end{equation}
Here $\Pi_1$ means $\Pi_h$ with the substitution $h=1$. The vector $f^{\B{m}} \in \mathbb{R}^{6}$ is the coefficient at $\B{m}$-th derivative in the truncation error of the scheme.

Equip the space of mesh functions with the norm
$$
\|f\|^2 = \sum\limits_{\B{\eta} \in \{1, \ldots, 1/h\}^2} h^2 \|f_{\B{\eta}}\|^2,
$$
and use the Euclidean norm on $\mathbb{C}^6$. For $\B{\phi} \in \mathbb{R}^2$, denote
$$
L(\B{\phi}) = \sum\limits_{\B{\zeta} \in \mathcal{S}} \exp(i\phi_x \zeta_x + i\phi_y \zeta_y) L_{\B{\zeta}} .
$$
The scheme is stable with constant $K \in [1, \infty)$ if for each $h$ each $(1/h)$-periodic solution of \eqref{eq3} satisfies $\|u(t)\| \le K \|u(0)\|$. This holds iff
\begin{equation}
\sup_{\B{\phi} \in \mathbb{R}^2} \sup\limits_{\nu > 0} \|\exp(-\nu L(\B{\phi}))\| \le K.
\label{eq_def_stability}
\end{equation}

The solution error is $\varepsilon_h(t,v_0) = u(t) - \Pi_h v(t, \ \cdot\ )$ where $u(t)$ is the solution of \eqref{eq3} with the initial data $\Pi_h v_0$, and $v(t,\B{r}) = v_0(\B{r}-\B{\omega} t)$. We say that the scheme has order $q$ if for each 1-periodic $v_0 \in C^{q+1}(\mathbb{R}^2)$ there holds $\|\varepsilon_h(t,v_0)\| \le (c_1 + c_2 t) h^q$ with some $c_1$ and $c_2$ depending on $v_0$. The optimal order of accuracy is the maximal value of $q$ such that this estimate holds.

The following proposition is a particular case of Theorem~3.1 in \cite{Bakhvalov2023}.
\begin{proposition}
Let the scheme \eqref{eq3} be stable. If for each multiindex $\B{m}$, $\vert\B{m}\vert=p+1$, there holds $f^{\B{m}} \in \mathrm{Im}\,L(0)$, then the optimal order of accuracy of the scheme \eqref{eq3} is $p+1$. Otherwise the optimal order of accuracy is $p$.
\label{th:1}
\end{proposition}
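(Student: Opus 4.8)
The plan is to treat the vectors $f^{\B{m}}$ as the leading coefficients of the truncation error and the hypothesis $f^{\B{m}}\in\mathrm{Im}\,L(0)$ as a solvability condition for a corrector that removes this leading error. First I would insert the sampled exact solution $\Pi_h v$, $v(t,\B{r})=v_0(\B{r}-\B{\omega}t)$, into the left-hand side of \eqref{eq3}. Expanding $v$ in a Taylor series about each block centre $\B{\eta}h$, using $\partial v/\partial t=-(\B{\omega}\cdot\nabla)v$, and invoking $p$-exactness (so that every contribution of order $\le p$ cancels), the residual $r$ collapses to exactly the expression \eqref{eq_trerr}:
\[
\frac{d}{dt}(\Pi_h v)_{\B{\eta}}+\frac1h\sum_{\B{\zeta}\in\mathcal{S}}L_{\B{\zeta}}(\Pi_h v)_{\B{\eta}+\B{\zeta}}
=\sum_{|\B{m}|\ge p+1}h^{|\B{m}|-1}f^{\B{m}}\,(\partial^{\B{m}}v)(\B{\eta}h,t)
=h^{p}\!\!\sum_{|\B{m}|=p+1}\!\! f^{\B{m}}\,(\partial^{\B{m}}v)(\B{\eta}h,t)+O(h^{p+1}).
\]
Since $\varepsilon_h$ solves the same scheme with source $-r$ and $\varepsilon_h(0)=0$, Duhamel's formula together with the stability bound \eqref{eq_def_stability} gives $\|\varepsilon_h(t)\|\le K\int_0^t\|r(s)\|\,ds\lesssim t\,h^{p}$. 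This is the baseline order $p$, valid unconditionally, and it shows that any improvement must come from cancelling the $h^{p}$ term.

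To gain the extra order I would build a corrected comparison function. Note that consistency places the constant vector in $\ker L(0)$, so $L(0)$ is singular and $f^{\B{m}}\in\mathrm{Im}\,L(0)$ is a genuine constraint. Assuming it holds for all $\B{m}$ with $|\B{m}|=p+1$, choose constant vectors $w^{\B{m}}\in\mathbb{R}^6$ with $L(0)w^{\B{m}}=-f^{\B{m}}$ and set
\[
\tilde v_{\B{\eta}}(t)=(\Pi_h v)_{\B{\eta}}(t)+h^{p+1}\!\!\sum_{|\B{m}|=p+1}\!\! w^{\B{m}}\,(\partial^{\B{m}}v)(\B{\eta}h,t).
\]
Applying $\tfrac1h\sum_{\B{\zeta}}L_{\B{\zeta}}$ to the added, slowly varying field produces $h^{p}L(0)w^{\B{m}}(\partial^{\B{m}}v)=-h^{p}f^{\B{m}}(\partial^{\B{m}}v)$ at leading order, which cancels the residual above, while its time derivative is already $O(h^{p+1})$; hence $\tilde v$ satisfies \eqref{eq3} with residual $O(h^{p+1})$. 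Duhamel then yields $\|u-\tilde v\|\lesssim t\,h^{p+1}$, and since $\|\tilde v-\Pi_h v\|\lesssim h^{p+1}$ the triangle inequality gives $\|\varepsilon_h\|\le(c_1+c_2t)h^{p+1}$.

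For the converse I would argue that when some $f^{\B{m}}\notin\mathrm{Im}\,L(0)$ no such corrector exists. Using $\mathrm{Im}\,L(0)=(\ker L(0)^{T})^{\perp}$, pick $\B{\ell}$ with $\B{\ell}^{T}L(0)=0$ and $\B{\ell}^{T}f^{\B{m}}\ne0$. Choosing data $v_0$ that excites $\partial^{\B{m}}$, I would track the component of the error along $\B{\ell}$: at the Fourier level the discrepancy between the numerical propagator $\exp(-\tfrac{t}{h}L(\B{\phi}))$ and the exact sampled evolution has, in the $\B{\ell}$-direction, an uncancellable contribution proportional to $t\,\B{\ell}^{T}f^{\B{m}}|\B{\phi}|^{p+1}$, giving a lower bound of order $h^{p}$ and hence optimal order exactly $p$. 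A parallel obstruction at order $|\B{m}|=p+2$, which a merely $p$-exact scheme does not correct, shows that the order does not exceed $p+1$ in the favourable case.

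The main obstacle is carrying out the corrector cancellation uniformly in $\B{\phi}$. The vectors $w^{\B{m}}$ are fixed by the single algebraic equation $L(0)w^{\B{m}}=-f^{\B{m}}$, which controls only the neighbourhood of the constant mode $\B{\phi}=0$, whereas Parseval sums the error over all $\B{\phi}\in\mathbb{R}^2$. One must show that replacing $L(\B{\phi})$ by $L(0)$ in the cancellation costs only one extra power of $|\B{\phi}|$ — hence, after the rapid decay of the Fourier coefficients of a $C^{p+2}$ datum is used, a genuine $O(h^{p+1})$ in the norm — and that the high-frequency modes, where $\B{k}=\B{\phi}/h$ is large and the smooth datum carries negligible energy, are held in check purely by the uniform stability estimate \eqref{eq_def_stability}. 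Reconciling this local small-$\B{\phi}$ expansion with the global stability bound, so that a single $\B{\phi}=0$ solvability condition dictates the convergence rate across the whole spectrum, is the delicate point of the argument.
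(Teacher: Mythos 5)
The paper does not prove this proposition at all: it is quoted as a particular case of Theorem~3.1 of \cite{Bakhvalov2023}, with only the remark that the sufficiency direction follows from writing the leading truncation terms in divergence form and that the necessity direction ``is more difficult to see'' and was proved there by spectral analysis. Your positive direction is essentially that divergence-form corrector argument and is sound: with $L(0)w^{\B{m}}=-f^{\B{m}}$ the perturbation $h^{p+1}w^{\B{m}}\,\partial^{\B{m}}v$ cancels the $O(h^{p})$ part of the residual up to $O(h^{p+1})$, and stability plus Duhamel closes the estimate.

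The genuine gap is the converse, and you have in effect conceded it in your final paragraph. Asserting an ``uncancellable contribution proportional to $t\,\B{\ell}^{T}f^{\B{m}}\vert\B{\phi}\vert^{p+1}$'' is not a lower bound: the left-null vector $\B{\ell}$ of $L(0)$ is not preserved by $\exp(-\tfrac{t}{h}L(\B{\phi}))$ for $\B{\phi}\neq 0$, so one must carry out the perturbation theory of the zero eigenvalue group of $L(\B{\phi})$ near $\B{\phi}=0$ --- where $L(0)$ may have a multidimensional kernel or nontrivial Jordan structure (indeed $\dim\ker L(0)=2$ in the very case $\omega_{y}=0$ that matters here), so the eigenprojections need not depend continuously on $\B{\phi}$ --- and then exhibit initial data for which the secular $t\,h^{p}$ term survives the summation over all Fourier modes without cancellation against the $O(1)$ coupling term $\tfrac{1}{h}\B{\ell}^{T}(L(\B{\phi})-L(0))\hat{\varepsilon}$. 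That is precisely the spectral analysis the paper defers to \cite{Bakhvalov2023}. In addition, your claim that ``a parallel obstruction at order $\vert\B{m}\vert=p+2$'' bounds the order by $p+1$ in the favourable case is unsupported: nothing in the argument rules out accidental cancellation of the next-order terms. As written, the proposal establishes the unconditional order-$p$ and the conditional order-$(p+1)$ upper bounds, but neither of the two optimality (lower-bound) assertions that the proposition actually makes.
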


If $f^{\B{m}} \in \mathrm{Im}\,L(0)$, then the leading terms of the truncation error may be represented in a divergence form, which yields the $(p+1)$-th order convergence. The second statement is more difficult to see. In \cite{Bakhvalov2023}, it was proved using the spectral analysis.

\section{Accuracy analysis of SD-RT(1)}

In this section, we study the accuracy of the SD-RT(1) scheme for \eqref{eq_TE} on the right-triangular meshes defined in Section~\ref{sect:regmesh}. Throughout this section, $\omega_x, \omega_y \ge 0$, $\omega_x+\omega_y > 0$.

Our analysis is based on Proposition~\ref{th:1}. To apply it, we need to know the stability, the co-kernel of the matrix $L(0)$, and some properties of $f^{\B{m}}$.

\subsection{Stability}

The eigenvalues of 
$$
L(\B{\phi}) = \left(L_{(0,0)}^x + L_{(-1,0)}^x \exp(-i\phi_x)\right) \omega_x + \left(L_{(0,0)}^y + L_{(0,-1)}^y \exp(-i\phi_y)\right) \omega_y
$$
were studied numerically in  \cite{SD8}. It was shown that for each $\B{\omega}$ and each $\B{\phi}$ all eigenvalues have nonnegative real parts. To establish the stability we need a stronger statement.

\begin{lemma}\label{th:stab1}
For each $\omega_x, \omega_y \ge 0$ there holds \eqref{eq_def_stability} with $K = 32$. As a corollary, the scheme is stable.
\end{lemma}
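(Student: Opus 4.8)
The plan is to prove the stronger, norm-based bound \eqref{eq_def_stability} by an energy (Lyapunov) argument rather than from the spectral information of \cite{SD8}: eigenvalues with nonnegative real part do not by themselves control $\|\exp(-\nu L(\B{\phi}))\|$ for the non-normal matrices that arise here. I would look for a constant Hermitian positive-definite matrix $H$ such that
\[
\tfrac12\bigl(H L(\B{\phi}) + L(\B{\phi})^* H\bigr) \succeq 0
\quad\text{for all } \B{\phi}\in\mathbb{R}^2 \text{ and all } \omega_x,\omega_y\ge 0.
\]
If such an $H$ exists, then $\frac{d}{d\nu}\|\exp(-\nu L)x\|_H^2 = -x^*\exp(-\nu L^*)\bigl(HL+L^*H\bigr)\exp(-\nu L)x \le 0$, so $\|\exp(-\nu L)\|_H\le 1$, and passing to the Euclidean norm gives $\|\exp(-\nu L)\|\le\sqrt{\lambda_{\max}(H)/\lambda_{\min}(H)}$. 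It then remains to produce an admissible $H$ with condition number at most $1024$, which yields $K=32$.

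Two reductions make the search for $H$ tractable. First, since $L(\B{\phi})=\omega_x A(\phi_x)+\omega_y B(\phi_y)$ is linear in $\B{\omega}$, with $A(\phi_x)=L^x_{(0,0)}+L^x_{(-1,0)}e^{-i\phi_x}$ and $B(\phi_y)=L^y_{(0,0)}+L^y_{(0,-1)}e^{-i\phi_y}$, and $\omega_x,\omega_y\ge 0$, we have $HL+L^*H=\omega_x(HA+A^*H)+\omega_y(HB+B^*H)$; hence it suffices to make each of the two one-parameter families $H$-dissipative. Second, the stated construction of $L^y$ from $L^x$ is conjugation by the involutive permutation matrix $R$ swapping coordinates $2\leftrightarrow 3$ and $4\leftrightarrow 6$, so that $B(\phi)=R\,A(\phi)\,R$. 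Choosing $H$ to commute with $R$ (that is, $RHR=H$) makes the $B$-condition equivalent to the $A$-condition, and the problem collapses to finding an $R$-invariant $H\succ 0$ with $HA(\phi)+A(\phi)^*H\succeq 0$ for every $\phi\in\mathbb{R}$.

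For the last step I would exploit the block structure visible in the explicit matrices: in the two groups of vertices $\{1,2,3\}$ and $\{4,5,6\}$ one has $A(z)=\begin{pmatrix}D&zC\\ C&D\end{pmatrix}$ with $z=e^{-i\phi}$, where $D$ is the common diagonal block of $L^x_{(0,0)}$ and $C$ is its lower-left block, which also appears as the upper-right block of $L^x_{(-1,0)}$. I would first try the ansatz $H=\mathrm{diag}(G,G)$ with a $3\times 3$ symmetric $G$ invariant under the swap $2\leftrightarrow 3$ (this automatically gives $RHR=H$), so that $HA+A^*H=\begin{pmatrix}S&P(z)\\ P(z)^*&S\end{pmatrix}$ with $S=GD+D^TG$ and $P(z)=zGC+C^TG$. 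After checking $S\succ 0$, the required semidefiniteness is equivalent, by the Schur complement, to the parametric inequality $S-P(z)^*S^{-1}P(z)\succeq 0$ for all $|z|=1$; as this is a Hermitian trigonometric matrix polynomial in $\phi$, it can be verified by writing its principal minors as nonnegative trigonometric polynomials (or through an explicit sum-of-squares factorization). The free parameters of $G$ are then fixed to secure this nonnegativity while keeping $\lambda_{\max}(G)/\lambda_{\min}(G)$, hence $\kappa_2(H)$, below $1024$; should the block-diagonal family prove insufficient, one enlarges to a general $R$-invariant $H$.

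I expect the genuine obstacle to be this last point: guessing a single weight $H$ that is simultaneously dissipative over the whole circle $|z|=1$ and well conditioned, and then proving the resulting $3\times 3$ matrix inequality for all $\phi$ rather than at sampled points. Were the identity admissible one would already obtain $K=1$, so the value $K=32$ indicates that a genuinely weighted norm is needed; the verification of the trigonometric positivity is where the computation concentrates and where a careful, possibly computer-assisted, minor-by-minor argument will be required.
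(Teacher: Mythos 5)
Your strategy is internally consistent as far as it goes (the reduction by linearity in $\B{\omega}$, the symmetry $B(\phi)=R\,A(\phi)\,R$, and the block form of $A(z)$ are all correct), but there is a genuine gap at exactly the point you flag: the matrix $H$ is never produced, and nothing in the proposal guarantees that it exists. What you need is a \emph{common} quadratic Lyapunov matrix for the entire family $\{\omega_x A(\phi_x)+\omega_y B(\phi_y)\}$, and this is strictly stronger than the bound \eqref{eq_def_stability} you are asked to prove: by the Kreiss matrix theorem, \eqref{eq_def_stability} is equivalent to the existence of symmetrizers $H(\B{\phi},\B{\omega})$ with uniformly bounded condition number that are \emph{allowed to vary} over the family, and collapsing them to a single constant $H$ is a common-Lyapunov-function problem that frequently has no solution even when each member of the family individually admits one. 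For spectral difference methods no such energy identity is known (the paper itself stresses the absence of an $L_2$-stability theory), and the stability of SD-RT in \cite{SD8,Veilleux2021} was established only by numerical eigenvalue analysis --- indirect evidence that a clean constant-$H$ energy estimate may simply not be available. Until you either exhibit $H$ and verify the trigonometric matrix inequality over all $|z|=1$, or prove its existence abstractly, the lemma is not established.

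For comparison, the paper's own proof is far more modest: it numerically diagonalizes $L(\B{\phi})=V\Lambda V^{-1}$ over a grid in $(\xi,\phi_x,\phi_y)$ with step $\pi/100$, checks that all eigenvalues have nonnegative real parts, and observes that the condition number of the eigenvector matrix never exceeds $32$, whence $\|\exp(-\nu L(\B{\phi}))\|\le\kappa(V)\le 32$. Note that the constant $32$ there is an eigenvector condition number, not $\sqrt{\kappa(H)}$ for a Lyapunov weight, so your inference that a weighted norm with $\kappa(H)\le 1024$ is what the value $K=32$ ``indicates'' does not match the actual source of the constant. The paper's verification is itself only a sampled computation with no control between grid points, so if you could actually produce a single well-conditioned $H$ your route would be stronger and more rigorous than the paper's; but as written, the central object of your proof is hypothetical, and the approach may fail for the non-mathematical reason that no such $H$ exists.
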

\begin{proof}
Without loss, $\omega_x = \cos\xi$, $\omega_y = \sin\xi$, $0 \le \xi \le \pi/2$. We use Lapack to find the eigenvalues and eigenvectors of $L(\B{\phi})$ for all admissible $\xi, \phi_x, \phi_y$ with the step $\pi/100$. The results show that all eigenvalues have nonnegative real parts, and the condition number of the matrix of eigenvectors does not exceed 32.
\end{proof}

\subsection{Properties of $L(0)$}

The matrix $L(0)$ is defined as
$$
L(0) = \left(L_{(0,0)}^x + L_{(-1,0)}^x\right) \omega_x + \left(L_{(0,0)}^y + L_{(0,-1)}^y\right) \omega_y.
$$

General considerations give us the following information.
\begin{itemize}
\item For each $\B{\omega}$ the vector $(1,1,1,1,1,1)^T$ belongs to the kernel of $L(0)$ because the scheme is exact on a constant solution.
\item For each $\B{\omega}$ the vector $(1,1,1,1,1,1)$ belongs to the co-kernel of $L(0)$ because the scheme is conservative.
\item If \mbox{$\B{\omega} = (1,0)^T$}, then $v=y$ is a steady solution of \eqref{eq_TE}. By 1-exactness, the vector $(0,0,1,1,1,0)^T$ also belongs to the kernel of $L(0)$.
\end{itemize}

We need to refine these results.

\begin{lemma}\label{th:L0}
If $\omega_x, \omega_y > 0$, the co-kernel of $L(0)$ is the span of $(1,1,1,1,1,1)$. 
If \mbox{$\omega_x > \omega_y = 0$}, then the co-kernel of $L(0)$ is the span of $(1,1,1,1,1,1)$ and $(5,5,2,0,0,3)$.
\end{lemma}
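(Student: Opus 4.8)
The plan is to treat the co-kernel as the left null space of $L(0)=\omega_x A^x+\omega_y A^y$, where $A^x := L_{(0,0)}^x+L_{(-1,0)}^x$ and $A^y := L_{(0,0)}^y+L_{(0,-1)}^y$ are the two fixed integer matrices obtained by adding the stencil blocks. First I would record the symmetry already visible in the data: if $P$ is the permutation matrix of $\sigma=(2\,3)(4\,6)$, then $A^y=PA^xP$, and consequently $P\,L(0;\omega_x,\omega_y)\,P=L(0;\omega_y,\omega_x)$. This lets me recover the symmetric case $\omega_y>\omega_x=0$ from the stated one and halves the bookkeeping. The easy half of the lemma is the inclusions. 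A one-line column-sum check shows that $(1,1,1,1,1,1)$ annihilates both $A^x$ and $A^y$ (this is conservativity), so it lies in the co-kernel for every $\B{\omega}$; a direct multiplication shows $(5,5,2,0,0,3)\,A^x=0$, so this vector lies in the co-kernel whenever $\omega_y=0$.

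What remains is to bound the co-kernel dimension from above. When $\omega_x>\omega_y=0$ the matrix $L(0)=\omega_x A^x$ is a fixed numerical matrix, so it suffices to exhibit one nonvanishing $4\times4$ minor of $A^x$ to conclude $\operatorname{rank}A^x=4$, hence $\dim\operatorname{coker}=2$. Since $(1,1,1,1,1,1)$ and $(5,5,2,0,0,3)$ are linearly independent and both lie in this two-dimensional space, they span it, which is the second assertion.

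The harder case is $\omega_x,\omega_y>0$, where I must show $\operatorname{rank}L(0)=5$, i.e. $\dim\operatorname{coker}=1$, at \emph{every} point of the open quadrant. A single $5\times5$ minor will not settle this, because each such minor is a homogeneous degree-$5$ polynomial in $(\omega_x,\omega_y)$ and could vanish along a ray $\omega_x/\omega_y=\mathrm{const}$. The clean way around this is to solve the homogeneous system $w\,L(0)=0$ by Gaussian elimination with $\lambda=\omega_x/\omega_y>0$ as a parameter: starting from the first-column relation $(\omega_x+\omega_y)w_1=\omega_x w_2+\omega_y w_3$ and eliminating the remaining variables through the other columns, one tracks the pivots and checks that every pivot is a strictly positive combination of $\omega_x,\omega_y$, so the elimination never degenerates for $\omega_x,\omega_y>0$ and forces $w$ to be proportional to $(1,1,1,1,1,1)$. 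Equivalently, one may list the $5\times5$ minors and verify they have no common zero with $\omega_x,\omega_y>0$, for instance by checking that the resultant in $\lambda$ of two of them has no positive root.

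The main obstacle is precisely this \emph{uniform} rank verification over the whole quadrant rather than at a single direction. Once it is in hand, combining it with the inclusions from the first paragraph gives the stated one-dimensional co-kernel when $\omega_x,\omega_y>0$, and the two-dimensional one when $\omega_x>\omega_y=0$.
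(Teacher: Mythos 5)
Your overall strategy is the same as the paper's, and everything you actually compute checks out: the column sums of $A^x$ vanish, $(5,5,2,0,0,3)\,A^x=0$, the permutation symmetry $A^y=PA^xP$ with $\sigma=(2\,3)(4\,6)$ is exactly the relation the paper states between $L^y_{(\zeta_y,\zeta_x)}$ and $L^x_{(\zeta_x,\zeta_y)}$, and you correctly identify the real difficulty --- showing $\operatorname{rank}L(0)=5$ \emph{uniformly} on the open quadrant, which a single $5\times5$ minor cannot settle. The one substantive omission is that you describe the parametric Gaussian elimination but do not carry it out: the claim that ``every pivot is a strictly positive combination of $\omega_x,\omega_y$'' is precisely the content of the hard case and is asserted rather than verified. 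The paper's proof is exactly the record of that elimination: it exhibits explicit integer matrices $S_L$, $S_R$ with $-\frac{1}{9}S_LL(0)S_R$ lower triangular with diagonal $(\omega_x+\omega_y,\,\omega_x+\omega_y,\,\omega_x+\omega_y,\,\omega_x,\,\omega_y,\,0)$, so the pivots are visibly positive for $\omega_x,\omega_y>0$ (rank $5$, co-kernel spanned by the last row of $S_L$, which is $(1,1,1,1,1,1)$), and for $\omega_y=0$ one diagonal entry vanishes and row $5$ becomes dependent on rows $1$, $3$, $4$, giving rank $4$ and the extra co-kernel vector $(-1,0,-1,-3,3,0)S_L=(5,5,2,0,0,3)$. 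So your plan is viable and would succeed, but to be a complete proof you must either produce the pivots of your elimination explicitly or exhibit an equivalence transformation like the paper's.
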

\begin{proof}
By the direct substitution, for non-degenerate matrices $S_L$ and $S_R$ defined by
$$
S_L = \left(\begin{array}{cccccc}
3 & 6 & 6 & 3 & 0 & 3 \\
1 & 1 & 1 & 0 & 0 & 3 \\
1 & 1 & 1 & 3 & 0 & 0 \\
2 & 1 & 2 & 1 & 0 & 1 \\
5 & 5 & 5 & 3 & 0 & 3 \\
1 & 1 & 1 & 1 & 1 & 1
\end{array} \right),
\quad
S_R = \left(\begin{array}{cccccc}
1 & 0 & 0 & 0 & -1 & 1 \\
0 & 1 & 0 & 1 & -1 & 1 \\
0 & 0 & 1 & 0 & 0 & 1 \\
0 & 0 & 0 & 0 & 0 & 1 \\
0 & 0 & 0 & 1 & 0 & 1 \\
0 & 0 & 0 & 1 & -1 & 1
\end{array} \right),
$$
there holds
$$
-\frac{1}{9} S_L L(0) S_R
=
\left(\begin{array}{cccccc}
\omega_x+\omega_y & 0 & 0 & 0 & 0 & 0 \\
0 & \omega_x+\omega_y & 0 & 0 & 0 & 0 \\
0 & 0 & \omega_x+\omega_y & 0 & 0 & 0 \\
-\omega_x/3 & \omega_y/3 & -\omega_x/3 & \omega_x & 0 & 0 \\
0 & 0 & 0 & \omega_x & \omega_y & 0 \\
0 & 0 & 0 & 0 & 0 & 0
\end{array} \right).
$$
If $\omega_x, \omega_y > 0$, then the last row of $S_L$ forms basis of the co-kernel of $L(0)$. If $\omega_x > \omega_y = 0$, then $\mathrm{dim}\,\mathrm{Ker} L(0) = 2$. It is easy to see that
$$
(-1,0,-1,-3,3,0) S_L L(0) S_R = 0.
$$
Thus, $(-1,0,-1,-3,3,0) S_L = (5,5,2,0,0,3)$ belongs to the co-kernel of $L(0)$. 
\end{proof}

\subsection{Mean truncation error on the periodic cell}


For \mbox{$f \in \mathbb{C}^6$}, let $I[f]$ be the piecewise-linear function on $(0,1)^2$ with point values $I[f](\B{a}_{\xi})=f_{\xi}$ at vertices $\xi=1,\ldots,6$ assigned according to the numbering in Fig.~\ref{fig:sol_points}. Then the orthogonality of $f^{\B{m}}$ and $(1,1,1,1,1,1)$ is equivalent to  zero integral of $I[f^{\B{m}}]$ over $(0,1)^2$. Now we show that this condition holds.

\begin{lemma}\label{th:2}
For each $\B{\omega} \in \mathbb{R}^2$ and each $\B{m}$ such that $\vert \B{m} \vert =2$ there holds
$$
Y:=\int\limits_{G} I[f^{\B{m}}](\B{r}) d\B{r} = 0, \quad G = (0,1)^2.
$$
\end{lemma}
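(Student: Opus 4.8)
The plan is to compute the integral $Y$ directly from the definition of $f^{\B{m}}$ in \eqref{eq_trerr}, exploiting linearity in $\B{\omega}$ to reduce to two scalar quantities. First I would observe that $f^{\B{m}} = \omega_x f^{\B{m},x} + \omega_y f^{\B{m},y}$, where $f^{\B{m},x}$ and $f^{\B{m},y}$ come from the $x$- and $y$-parts of the scheme operator, and correspondingly $Y = \omega_x Y^x + \omega_y Y^y$. Since the claim must hold for all $\B{\omega}$, it suffices to prove $Y^x = 0$ and $Y^y = 0$ separately. By the symmetry between $L^x$ and $L^y$ described in Section~\ref{sect:regmesh} (the permutation of rows and columns together with swapping $m_x \leftrightarrow m_y$), it is enough to handle $Y^x = 0$; the case $Y^y = 0$ then follows by the same computation with coordinates interchanged.

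For the integral itself, I would use the fact that for a piecewise-linear function $I[f]$ on $(0,1)^2$ split into the two triangles of Fig.~\ref{fig:sol_points}, the integral over each triangle is the area ($1/2$) times the average of the three vertex values. Thus $\int_G I[f]\,d\B{r}$ is an explicit fixed linear functional $\ell$ on $\mathbb{C}^6$, namely a weighted sum of the six components $f_1, \ldots, f_6$ with rational weights determined by which vertices bound each triangle. Writing $Y^x = \ell(f^{\B{m},x})$, the task becomes evaluating $\ell$ applied to the vector in \eqref{eq_trerr}. This requires computing, for each multiindex $\B{m}$ with $|\B{m}|=2$ (there are three: $(2,0)$, $(1,1)$, $(0,2)$), the Lagrangian samples $(\Pi_1 \B{r}^{\B{m}}/\B{m}!)_{\B{\zeta}}$ at the stencil shifts $\B{\zeta} \in \mathcal{S}$, applying the known matrices $L_{\B{\zeta}}^x$, and subtracting the transport-derivative term $(\Pi_1 (\D_x)(\B{r}^{\B{m}}/\B{m}!))_0$ evaluated at the collocation points $\B{a}_\xi$.

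The cleaner route, which I expect to be the actual proof, is to avoid case-by-case verification by recognizing $\ell$ as (proportional to) the co-kernel vector $(1,1,1,1,1,1)$ identified in Lemma~\ref{th:L0}. Indeed, if the vertex weights of the integration functional turn out to be equal across all six points — which the triangulation in Fig.~\ref{fig:sol_points} should enforce by symmetry, each interior vertex being shared and the whole cell having area $1$ — then $Y = \tfrac{1}{6}\sum_\xi f^{\B{m}}_\xi$ is just a multiple of $(1,1,1,1,1,1) \cdot f^{\B{m}}$. I would then show this equals zero by using that $(1,1,1,1,1,1)$ lies in the co-kernel of $L(0)$ (the conservativity observation preceding Lemma~\ref{th:L0}): summing \eqref{eq_trerr} against $(1,1,\ldots,1)$ collapses the stencil sum into $(1,\ldots,1)\,L(0)\,(\Pi_1 \B{r}^{\B{m}}/\B{m}!) = 0$, while the remaining transport term sums to zero because $\D_x (\B{r}^{\B{m}}/\B{m}!)$ and $\D_y(\B{r}^{\B{m}}/\B{m}!)$ are degree-one polynomials whose mean over the symmetric collocation layout vanishes against the uniform weights.

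The main obstacle is verifying that the integration functional $\ell$ really is the uniform functional $\tfrac{1}{6}(1,1,1,1,1,1)$ rather than some non-uniform weighting, since the two triangles in the periodic cell do not share all six vertices symmetrically: vertex $\B{a}_5 = (1,1)^T$ and $\B{a}_1 = (0,0)^T$ each belong to only one triangle, whereas the duplicated points contribute to both. If the weights are \emph{not} uniform, then the reduction to Lemma~\ref{th:L0} fails and I would instead have to carry out the explicit evaluation of $\ell(f^{\B{m},x})$ for each of the three multiindices, checking the cancellation between the stencil contribution and the transport term directly. I therefore expect the bulk of the work to be the bookkeeping of vertex multiplicities and the verification that, even with non-uniform triangle weights, the degree-two truncation coefficient integrates to zero by the exactness structure of $\Pi_1$.
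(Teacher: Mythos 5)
Your reduction of $Y$ to a fixed linear functional is correct, and the functional is in fact uniform: the two triangles partition the solution points as $\{1,2,3\}$ and $\{4,5,6\}$, each triangle has area $1/2$, and the integral of a linear function over a triangle is the area times the mean of its vertex values, so $Y=\tfrac16\sum_\xi f^{\B{m}}_\xi$; the non-uniformity you worry about does not occur. The genuine gap is in the mechanism you propose for showing $(1,\ldots,1)\cdot f^{\B{m}}=0$: both of your sub-claims are false. First, the transport term does \emph{not} vanish against the uniform weights. For $\B{m}=(2,0)$ one has $(\B{\omega}\cdot\nabla)(x^2/2)=\omega_x x$, and the sum of $x$ over the six solution points $(0,0),(1,0),(0,1),(0,1),(1,1),(1,0)$ equals $3$, so $(1,\ldots,1)\cdot\bigl(\Pi_1\omega_x x\bigr)_0=3\omega_x\neq 0$. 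Second, the stencil sum $\sum_{\B{\zeta}}L_{\B{\zeta}}\bigl(\Pi_1\B{r}^{\B{m}}/\B{m}!\bigr)_{\B{\zeta}}$ does not collapse to $L(0)$ applied to a single vector, because for a quadratic $q$ the sample vectors $(\Pi_1 q)_{\B{\zeta}}$ differ from block to block ($q$ is not $1$-periodic); the co-kernel identity $(1,\ldots,1)\,L(0)=0$ therefore does not apply. In fact each of the two terms in \eqref{eq_trerr} contributes $3\omega_x$ to the component sum (compare $f^{(2,0)}$ computed in the proof of Theorem~\ref{th:main}), and the lemma holds only because they cancel against each other --- which is precisely what has to be proved and what your argument does not establish. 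Your fallback of explicit evaluation for the three multiindices would work, but it is not carried out.

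The paper's proof takes a different route that avoids the matrix bookkeeping entirely: it uses that $I\bigl[\sum_{\B{\zeta}}L_{\B{\zeta}}u_{\B{\zeta}}\bigr]=\mathrm{div}\,\B{f}_e[u]$ on each triangle, applies the divergence theorem to turn $Y$ into a sum of edge integrals of $(\B{\omega}\cdot\B{n})\bigl(\Pi_1 q-q\bigr)$ with the upwind trace, and then cancels interior edges pairwise and opposite boundary edges of the periodic cell using the $1$-periodicity of $\B{r}^{\B{m}}-\Pi_1\B{r}^{\B{m}}$; this is why the result extends to all $p$ and to DG. If you want to rescue your linear-algebra route, you must account for the block dependence, e.g.\ write $(\Pi_1 q)_{\B{\zeta}}=(\Pi_1 q)_0+\bigl(\Pi_1(q(\cdot+\B{\zeta})-q)\bigr)_0$ and use $1$-exactness on the linear increments $q(\cdot+\B{\zeta})-q$; that is no longer a one-line co-kernel argument.
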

\begin{proof}
By the definition \eqref{eq_trerr} of $f^{\B{m}}$,
\begin{equation}
Y = \int\limits_{G} I\!\left[ - \left(\Pi_1 (\B{\omega} \cdot \nabla) \frac{\B{r}^{\B{m}}}{\B{m}!}\right)_{0} + \sum\limits_{\B{\zeta} \in \mathcal{S}} L_{\B{\zeta}} \left(\Pi_1 \frac{\B{r}^{\B{m}}}{\B{m}!}\right)_{\B{\zeta}}\right]\!(\B{r}) d\B{r}.
\label{eq_l4tp}
\end{equation}

Since $(\B{\omega} \cdot \nabla) \B{r}^{\B{m}}$ is a first-order polynomial, then the map $\phi \to I[(\Pi_1 \phi)_0]$ keeps it unmodified within $G$. Therefore,
$$
\int\limits_{G} I\left[\left(\Pi_1 (\B{\omega} \cdot \nabla) \frac{\B{r}^{\B{m}}}{\B{m}!}\right)_{0}\right] d\B{r} =
\int\limits_{G} (\B{\omega} \cdot \nabla) \frac{\B{r}^{\B{m}}}{\B{m}!} d\B{r}.
$$

The coefficients $L_{\B{\zeta}}$ in \eqref{eq3} are defined so that for each piecewise linear function $u$ and both triangles $e \subset G$ there holds
$$
I\Biggl[\,\sum\limits_{\B{\zeta} \in \mathcal{S}} L_{\B{\zeta}} u_{\B{\zeta}}\Biggr](\B{r}) = \mathrm{div} \B{f}_e[u](\B{r}), \quad \B{r} \in e.
$$
Taking this with $u = \Pi_1 \B{r}^{\B{m}}/\B{m}!$ to \eqref{eq_l4tp} we obtain
$$
Y =  \sum\limits_{e \subset G} \int\limits_{e} \left(-(\B{\omega}\cdot\nabla) \frac{\B{r}^{\B{m}}}{\B{m}!} + \mathrm{div} \B{f}_e\!\left[\Pi_1 \frac{\B{r}^{\B{m}}}{\B{m}!}\right]\right) d\B{r}.
$$
By the Gauss theorem,
$$
Y = \sum\limits_{\Gamma} \int\limits_{\Gamma} \left(-(\B{\omega}\cdot\B{n}) \frac{\B{r}^{\B{m}}}{\B{m}!} + \B{n}\cdot \B{f}_e\!\left[\Pi_1 \frac{\B{r}^{\B{m}}}{\B{m}!}\right]\right) dl
$$
where the sum is by edges of triangles from $G$, and the unit normal $\B{n}$ is directed outwards. On each edge, the function $\B{n} \cdot \B{f}_e[u]$ is a first-order polynomial (see Section~\ref{sect:RT}). By \eqref{eq_SD2}, it equals the upwind limit value of $u$ multiplied by $\B{\omega}\cdot\B{n}$. Thus,
$$
Y = \sum\limits_{\Gamma} \int\limits_{\Gamma} (\B{\omega}\cdot\B{n}) \left(-\frac{\B{r}^{\B{m}}}{\B{m}!} + \left(\Pi_1 \frac{\B{r}^{\B{m}}}{\B{m}!}\right)(\B{r} \mp 0\B{n})\right) dl.
$$
We take the upper sign if $\B{\omega} \cdot \B{n} > 0$ and the lower sign otherwise. The edge located within the periodic cell counts twice with opposite normal direction and yields zero in sum. Since $\B{r}^{\B{m}} \in P_2$, then the function $\B{r}^{\B{m}} - \Pi_1 \B{r}^{\B{m}}$ is 1-periodic. Thus, for each pair of opposite edges the expression in parentheses is the same, and the normals are opposite. Therefore, the sum by edges yields zero.
\end{proof}

\begin{remark}
The proof of Lemma~\ref{th:2} extends to SD-RT($p$) for each $p \in \mathbb{N}$ and to the discontinuous Galerkin method.
\end{remark}

\subsection{Main result}

\begin{theorem}\label{th:main}
Consider the SD-RT(1) scheme for \eqref{eq_TE} on the right-triangular meshes defined in Section~\ref{sect:regmesh}. If $\omega_x, \omega_y > 0$, then the optimal order of accuracy is 2. If $\omega_x > \omega_y = 0$, then the optimal order of accuracy is 1.
\end{theorem}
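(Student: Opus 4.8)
The plan is to apply Proposition~\ref{th:1}, which reduces the determination of the optimal order of accuracy to two pieces of information: the stability of the scheme, and whether each truncation-error coefficient $f^{\B{m}}$ (for $|\B{m}|=p+1=2$) lies in $\mathrm{Im}\,L(0)$. Stability is already secured by Lemma~\ref{th:stab1}, so the whole argument rests on the membership condition $f^{\B{m}} \in \mathrm{Im}\,L(0)$ for every multiindex $\B{m}$ with $|\B{m}|=2$, namely $\B{m} \in \{(2,0),(1,1),(0,2)\}$.

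First I would recall the elementary linear-algebra fact that $\mathrm{Im}\,L(0)$ is exactly the orthogonal complement of the co-kernel of $L(0)$. Thus $f^{\B{m}} \in \mathrm{Im}\,L(0)$ holds if and only if $f^{\B{m}}$ is orthogonal to every vector spanning the co-kernel. Lemma~\ref{th:L0} gives this co-kernel explicitly in the two regimes: for $\omega_x,\omega_y>0$ it is spanned by $(1,1,1,1,1,1)$ alone, while for $\omega_x>\omega_y=0$ it is spanned by $(1,1,1,1,1,1)$ together with $(5,5,2,0,0,3)$.

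For the case $\omega_x,\omega_y>0$, the only orthogonality to check is against $(1,1,1,1,1,1)$, and this is precisely the content of Lemma~\ref{th:2}, which asserts (via the identification with $\int_G I[f^{\B{m}}]\,d\B{r}=0$) that $f^{\B{m}} \perp (1,1,1,1,1,1)$ for every $\B{m}$ with $|\B{m}|=2$ and every $\B{\omega}$. Hence all three coefficients lie in $\mathrm{Im}\,L(0)$, and Proposition~\ref{th:1} yields optimal order $2$. For the degenerate case $\omega_x>\omega_y=0$, the first orthogonality still holds by Lemma~\ref{th:2}, so it remains to test the second co-kernel vector $(5,5,2,0,0,3)$. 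Here I would specialize the definition \eqref{eq_trerr} of $f^{\B{m}}$ at $\B{\omega}=(\omega_x,0)^T$, using $L_{(0,0)}^x$ and $L_{(-1,0)}^x$ (the $y$-matrices drop out since $\omega_y=0$) together with the explicit solution-point offsets $\B{a}_\xi$, and compute the scalar product $(5,5,2,0,0,3)\cdot f^{\B{m}}$ for each of $\B{m}=(2,0),(1,1),(0,2)$. The expectation is that at least one of these inner products is nonzero, so that the corresponding $f^{\B{m}} \notin \mathrm{Im}\,L(0)$, and Proposition~\ref{th:1} forces the optimal order down to $1$.

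The main obstacle is the second case: I must exhibit a concrete $\B{m}$ for which $(5,5,2,0,0,3)\cdot f^{\B{m}} \neq 0$, which requires carrying out the explicit evaluation of $f^{\B{m}}$ from \eqref{eq_trerr}. This is a finite but error-prone computation — one assembles the Lagrangian interpolant values $(\Pi_1 \B{r}^{\B{m}}/\B{m}!)_{\B{\zeta}}$ at the stencil shifts $\B{\zeta}\in\mathcal{S}$, subtracts the derivative term $(\Pi_1(\B{\omega}\cdot\nabla)\B{r}^{\B{m}}/\B{m}!)_0$, applies the matrices $L_{\B{\zeta}}^x$, and contracts with the co-kernel vector. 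A clean way to organize it is to factor out $\omega_x$ (every term is proportional to it in this regime) and show the resulting constant is nonzero for some $\B{m}$. Once a single nonvanishing inner product is found, the dichotomy of Proposition~\ref{th:1} immediately distinguishes the two convergence orders and completes the proof.
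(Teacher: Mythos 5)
Your overall strategy coincides with the paper's: Proposition~\ref{th:1} combined with stability (Lemma~\ref{th:stab1}), the identification of the co-kernel of $L(0)$ (Lemma~\ref{th:L0}), and the orthogonality of every $f^{\B{m}}$ to $(1,1,1,1,1,1)$ (Lemma~\ref{th:2}). The case $\omega_x,\omega_y>0$ is complete exactly as you present it.

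The gap is in the case $\omega_x>\omega_y=0$: you reduce the claim to exhibiting some $\B{m}$ with $(5,5,2,0,0,3)\cdot f^{\B{m}}\neq 0$, but you never carry out that computation. ``The expectation is that at least one of these inner products is nonzero'' is not a proof --- a priori all three inner products could vanish, in which case Proposition~\ref{th:1} would give order $2$ rather than $1$, so the nonvanishing is precisely where the content of the second half of the theorem lies. The paper closes this step with a short trick that avoids assembling the stencil by brute force: when $\omega_y=0$ only $L^x_{(0,0)}$ and $L^x_{(-1,0)}$ act, and the nonzero columns of these matrices correspond to solution points with $x$-coordinate in $\{0,1\}$, where $x^2/2$ and $x/2$ take identical values; hence $\sum_{\B{\zeta}}L_{\B{\zeta}}\bigl(\Pi_1 (x^2/2)\bigr)_{\B{\zeta}}=\sum_{\B{\zeta}}L_{\B{\zeta}}\bigl(\Pi_1 (x/2)\bigr)_{\B{\zeta}}=\tfrac{1}{2}\omega_x(1,\ldots,1)^T$ by $1$-exactness, so
$f^{(2,0)}=\omega_x\bigl(\tfrac12,-\tfrac12,\tfrac12,\tfrac12,-\tfrac12,-\tfrac12\bigr)^T$
and $(5,5,2,0,0,3)\cdot f^{(2,0)}=-\omega_x/2\neq 0$. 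You should either reproduce this argument or actually perform the explicit evaluation you describe; without one of the two, the order-$1$ conclusion is asserted rather than proved.
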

\begin{proof}

If $\omega_x, \omega_y > 0$, then the vector $(1,1,1,1,1,1)$ forms the basis in the co-kernel of $L(0)$. The statement of the theorem follows from  Proposition~\ref{th:1} and Lemma~\ref{th:2}.

Consider the case $\omega_x > \omega_y = 0$. 
To evaluate the numerical derivative of a mesh function at the zeroth block, the scheme uses its values at $x=0$ and $x=1$. At these points, the values of $\Pi_1 (x^2/2)$ coincide with the values of $\Pi_1 (x/2)$. So the numerical derivative of $x^2/2$ at zeroth block is equal to the numerical derivative of $x/2$, which is $1/2$ by $1$-exactness. Thus,
$$
f^{(2,0)} = -  (\Pi_1 x)_0\ \omega_x + \left(\begin{array}{c} 1/2 \\ 1/2 \\ 1/2 \\ 1/2 \\ 1/2 \\ 1/2 \end{array}\right) \omega_x  = \left(\begin{array}{c} 1/2 \\ -1/2 \\ 1/2 \\ 1/2 \\ -1/2 \\ -1/2 \end{array}\right) \omega_x.
$$
Clearly, $f^{(2,0)}$ is not orthogonal to the vector $(5,5,2,0,0,3)$, which belongs to the co-kernel of $L(0)$ by  Lemma~\ref{th:L0}. Hence, $f^{(2,0)} \not\in \mathrm{Im}\,L(0)$. By Proposition~\ref{th:1}, the optimal order of accuracy \mbox{is 1}.
\end{proof}

\begin{remark}\label{remark:2}
Up to this moment, we used the Lagrangian mapping $\Pi_h$. However, the statement of Theorem~\ref{th:main} holds for any local mapping $\breve{\Pi}_h$ (for instance, the orthogonal $L_2$-projection) that coincides with $\Pi_h$ on linear functions. Indeed, for each smooth $f$ there holds $\|(\breve{\Pi}_h - \Pi_h) f\| \le c(f) h^2$. If the scheme is second-order in the sense of $\Pi_h$, then by stability and the triangle inequality it is second-order in the sense of $\breve{\Pi}_h$, and vice versa.
\end{remark}

\subsection{The long-time simulation accuracy}

In this section we show that SD-RT(1) possesses the second order of accuracy in the long-time simulation for each $\B{\omega}$. For $\omega_x, \omega_y > 0$, this follows from Theorem~\ref{th:main}, so we need to consider the case $\omega_x > \omega_y = 0$ only.

\begin{lemma}\label{th:lemma6}
Let $\omega_x > \omega_y = 0$. Then the scheme \eqref{eq3} is 2-exact in the sense of $\tilde{\Pi}_h$ taking each 1-periodic $v \in C^2(\mathbb{R}^2)$ to the mesh function with the components
$$
(\tilde{\Pi}_h v)_{\B{\eta},\xi} =
(\Pi_h v)_{\B{\eta},\xi} + h c_{\xi} \left(\Pi_h \left(\frac{1}{2}\frac{\D v}{\D x} - \frac{\D v}{\D y} \right)\right)_{\B{\eta},\xi} - \frac{1}{6} h^2 d_{\xi} \left(\Pi_h \frac{\D^2 v}{\D x^2}\right)_{\B{\eta},\xi},
$$
where $c_{\xi} = 1$ for $\xi = 3, 4, 5$ and $c_{\xi}=0$ otherwise, and  $d_{\xi} = 1$ for $\xi = 1, 4$ and $d_{\xi}=0$ otherwise.
\end{lemma}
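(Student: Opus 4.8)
The plan is to unfold ``$2$-exactness in the sense of $\tilde{\Pi}_h$'' into a finite algebraic identity and verify it. By definition I must show that for every $f\in P_2$ the mesh function $u(t)=\tilde{\Pi}_h g(t,\,\cdot\,)$, $g(t,\B{r})=f(\B{r}-\B{\omega}t)$, solves \eqref{eq3}. Since $\tilde{\Pi}_h$ is built from $\Pi_h$ applied to $v$ and to its spatial derivatives, it commutes with $\D_t$, so $du/dt=\tilde{\Pi}_h\D_t g=-\tilde{\Pi}_h(\omega_x\D_x g)$ (recall $\omega_y=0$). Because $g(t,\,\cdot\,)\in P_2$ for every $t$ and the whole construction is invariant under time translation, it suffices to prove that the residual
\[
R[v]_{\B{\eta}}:=-\bigl(\tilde{\Pi}_h\,\omega_x\D_x v\bigr)_{\B{\eta}}+\frac1h\sum_{\B{\zeta}\in\mathcal S}L_{\B{\zeta}}\,(\tilde{\Pi}_h v)_{\B{\eta}+\B{\zeta}}
\]
vanishes for all $v\in P_2$; by translation invariance I evaluate at $\B{\eta}=\B{0}$, and by linearity I reduce to the basis $\{1,x,y,x^2/2,xy,y^2/2\}$.

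First I dispose of $P_1$. For linear $v$ the $O(h^2)$ part of $\tilde{\Pi}_h$ vanishes and the $O(h)$ part collapses to the constant vector $h(\tfrac12 v_x-v_y)\,c$ with $c=(0,0,1,1,1,0)^{T}$, because $\Pi_h$ of a constant is that constant times $(1,\dots,1)^{T}$ and the diagonal support $C=\operatorname{diag}(c)$ selects the entries $3,4,5$; moreover $\omega_x v_x$ is then constant, so $\tilde{\Pi}_h(\omega_x v_x)=\Pi_h(\omega_x v_x)$ and the first term of $R[v]$ receives no correction. The stencil sends $h(\tfrac12 v_x-v_y)c$ to $(\tfrac12 v_x-v_y)L(0)c$, so the whole correction to $R[v]$ is $(\tfrac12 v_x-v_y)L(0)c$. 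Recalling from the discussion preceding Lemma~\ref{th:L0} that $c\in\operatorname{Ker}L(0)$ when $\omega_y=0$, this is annihilated and $R[v]=0$ follows from the $1$-exactness of $\Pi_h$. Thus $\tilde{\Pi}_h$ inherits $1$-exactness, and only the three quadratic monomials remain.

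For a homogeneous quadratic $v=\B{r}^{\B{m}}/\B{m}!$ I split $R[v]_{\B{0}}$ into four contributions, using the scaling $(\Pi_h\phi)_{\B{\zeta}}=h^{\deg\phi}(\Pi_1\phi)_{\B{\zeta}}$ for homogeneous $\phi$. The uncorrected part $-(\Pi_h\omega_x v_x)_{\B{0}}+\tfrac1h\sum_{\B{\zeta}}L_{\B{\zeta}}(\Pi_h v)_{\B{\zeta}}$ equals $h f^{\B{m}}$ by the definition \eqref{eq_trerr}. The $O(h)$ correction splits in turn: inside $-\tilde{\Pi}_h(\omega_x v_x)$ it gives the constant $-h\,\omega_x(\tfrac12 v_{xx}-v_{xy})c$, while inside the stencil it gives $h\sum_{\B{\zeta}}L_{\B{\zeta}}C\,(\Pi_1 g)_{\B{\zeta}}$ with $g=\tfrac12 v_x-v_y$; crucially $g$ is now genuinely linear, so its stencil values differ from block to block and the $\tfrac1h$ of the scheme turns this into an $O(1)$ term. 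The $O(h^2)$ correction yields $-\tfrac{h}{6}v_{xx}\,L(0)d$ with $d=(1,0,0,1,0,0)^{T}$. Dividing by $h$, the claim becomes
\[
f^{\B{m}}-\omega_x\bigl(\tfrac12 v_{xx}-v_{xy}\bigr)c+\sum_{\B{\zeta}\in\mathcal S}L_{\B{\zeta}}\,C\,(\Pi_1 g)_{\B{\zeta}}-\tfrac{v_{xx}}{6}\,L(0)d=0 ,
\]
which I check for $v\in\{x^2/2,\ xy,\ y^2/2\}$. Since with $\omega_y=0$ every term carries a factor $\omega_x$ (as $L_{\B{\zeta}}=\omega_x L_{\B{\zeta}}^x$ and $L_{(0,-1)}^x=0$), this is an $\B{\omega}$-free identity involving only $L_{(0,0)}^x$, $L_{(-1,0)}^x$, the value of $f^{(2,0)}$ found in the proof of Theorem~\ref{th:main} (and the analogous $f^{(1,1)}$, $f^{(0,2)}$), and the vertex values of $g$ at the solution points $\B{a}_\xi$ and $\B{a}_\xi+(-1,0)$.

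The substance of the lemma lies in this last verification, and it is also the main obstacle: it is here that the constants $\tfrac12$, $-1$, $-\tfrac16$ and the supports $c,d$ are forced. The cancellation cannot be achieved by a constant-vector correction of $\Pi_h$ alone, since that would require $f^{(2,0)}\in\operatorname{Im}L(0)$, which is false by Lemma~\ref{th:L0} and the proof of Theorem~\ref{th:main}. What makes $\tilde{\Pi}_h$ work is precisely that its $O(h)$ part is a non-constant (linear) field: the difference stencil converts its spatial slope into the $O(1)$ contribution $\sum_{\B{\zeta}}L_{\B{\zeta}}C(\Pi_1 g)_{\B{\zeta}}$, which reaches outside $\operatorname{Im}L(0)$ and absorbs $f^{\B{m}}$. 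Keeping track of this block-dependent term correctly across the stencil is the delicate point; the quadratic ansatz itself can be motivated by the second-order Taylor expansion about $\B{\phi}=\B{0}$ of the physical eigenvector of $L(\B{\phi})$, but the cleanest route to the lemma is the direct substitution just outlined.
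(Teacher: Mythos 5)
Your proposal is correct and takes essentially the same route as the paper: the paper's own proof is likewise ``by direct substitution,'' carried out symbolically in Appendix~\ref{sect:A} with undetermined coefficients $b,c,d$ where you plug in the fixed constants. Your reduction to the displayed identity is sound (it does hold for $v=x^2/2$, $xy$, $y^2/2$ when the matrix arithmetic is carried out), and your preliminary observation that the $O(h)$ correction preserves $1$-exactness because $(0,0,1,1,1,0)^T\in\mathrm{Ker}\,L(0)$ for $\omega_y=0$ is a correct supplement that the paper's computation handles implicitly.
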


The proof is by direct substitution, see Appendix~\ref{sect:A}.

\begin{corollary}
Consider the SD-RT(1) scheme \eqref{eq_02}--\eqref{eq_id} on the right-triangular meshes defined in Section~\ref{sect:regmesh}. Let $\omega_x > \omega_y = 0$. Then for each 1-periodic $v_0 \in C^3(\mathbb{R}^2)$ there holds
$$
\|\varepsilon_h(t,v_0)\| \le c(v_0) (h + th^2).
$$
\end{corollary}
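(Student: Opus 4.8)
The plan is to transfer the second-order accuracy already available for the modified projection $\tilde{\Pi}_h$ (Lemma~\ref{th:lemma6}) to the Lagrangian mapping $\Pi_h$ by a standard stability-plus-truncation-error argument, paying only the $O(h)$ price of the mismatch between $\tilde{\Pi}_h$ and $\Pi_h$. First I would set $v(t,\B{r}) = v_0(\B{r}-\B{\omega}t)$ and introduce the reference mesh function $\tilde{u}(t) = \tilde{\Pi}_h v(t,\cdot)$. Since $\tilde{\Pi}_h$ is linear and time-independent while $\D v/\D t = -\B{\omega}\cdot\nabla v$, we have $d\tilde{u}/dt = -\tilde{\Pi}_h(\B{\omega}\cdot\nabla v)$, so substituting $\tilde{u}$ into \eqref{eq3} produces a defect
$$
\tilde{\delta}_{\B{\eta}}(t) = -\bigl[\tilde{\Pi}_h(\B{\omega}\cdot\nabla v(t))\bigr]_{\B{\eta}} + \frac{1}{h}\sum_{\B{\zeta}\in\mathcal{S}} L_{\B{\zeta}}\bigl[\tilde{\Pi}_h v(t)\bigr]_{\B{\eta}+\B{\zeta}}.
$$

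The key step is to bound $\|\tilde{\delta}(t)\| \le c(v_0) h^2$. To this end I would freeze $t$, expand $v(t,\cdot)$ around $\B{\eta}h$ into its second-order Taylor polynomial $Q$ plus a remainder $R = O(|\B{r}-\B{\eta}h|^3)$. The contribution of $Q \in P_2$ to $\tilde{\delta}_{\B{\eta}}$ vanishes: a fixed-time polynomial is just a traveling wave at one instant, so the $2$-exactness of Lemma~\ref{th:lemma6} (together with translation invariance of $L_{\B{\zeta}}$ and $\tilde{\Pi}_h$) makes the bracketed expression vanish identically on $Q$. Hence $\tilde{\delta}_{\B{\eta}}$ equals the same expression evaluated on $R$. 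All solution points entering the stencil $\mathcal{S}$ lie within $O(h)$ of $\B{\eta}h$, so there $R = O(h^3)$, $\nabla R = O(h^2)$, $\nabla^2 R = O(h)$; feeding these into the explicit form of $\tilde{\Pi}_h$, whose two corrections carry the factors $h$ and $h^2$, gives $[\tilde{\Pi}_h R] = O(h^3)$ and $[\tilde{\Pi}_h(\B{\omega}\cdot\nabla R)] = O(h^2)$, so even after the $1/h$ prefactor one is left with $\tilde{\delta}_{\B{\eta}} = O(h^2)$. This is exactly where $v_0 \in C^3$ is used, through the bound on $\nabla^3 v$ controlling $R$.

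Next I would invoke stability. By the Bloch/Fourier decomposition of $(1/h)$-periodic mesh functions, the solution operator $\mathcal{G}(\tau)$ of the homogeneous scheme acts mode-wise as $\exp(-(\tau/h)L(\B{\phi}))$ with $\B{\phi}$ on the dual lattice, so \eqref{eq_def_stability} (Lemma~\ref{th:stab1}, $K=32$) gives $\|\mathcal{G}(\tau)\| \le K$ for every $\tau \ge 0$ and every $h$. The error $w = \tilde{u} - u$ solves $dw/dt = -\tfrac1h L w + \tilde{\delta}$ with $w(0) = (\tilde{\Pi}_h - \Pi_h)v_0$, so Duhamel's formula together with $\|\mathcal{G}\|\le K$ yields
$$
\|w(t)\| \le K\|(\tilde{\Pi}_h - \Pi_h)v_0\| + K\int_0^t \|\tilde{\delta}(s)\|\,ds \le K\,c(v_0)\,(h + t h^2),
$$
where $\|(\tilde{\Pi}_h - \Pi_h)v_0\| \le c(v_0) h$ because the corrections in $\tilde{\Pi}_h$ are $O(h)$.

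Finally, writing $\varepsilon_h(t,v_0) = u(t) - \Pi_h v(t,\cdot) = -w(t) + (\tilde{\Pi}_h - \Pi_h)v(t,\cdot)$ and using that $v(t,\cdot)$ is a rigid translate of $v_0$, so $\|(\tilde{\Pi}_h - \Pi_h)v(t,\cdot)\| \le c(v_0) h$ uniformly in $t$, the triangle inequality gives $\|\varepsilon_h(t,v_0)\| \le c(v_0)(h + t h^2)$, as claimed. I expect the main obstacle to be precisely the defect estimate: one must check that the deliberately chosen $O(h)$ and $O(h^2)$ corrections built into $\tilde{\Pi}_h$ conspire, after division by $h$ in \eqref{eq3}, to cancel the otherwise $O(h)$ leading truncation error and leave only $O(h^2)$. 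This cancellation is what Lemma~\ref{th:lemma6} encodes, so the genuine work here is the careful Taylor bookkeeping on the remainder $R$ rather than the stability argument, which is routine once $\|\mathcal{G}(\tau)\|\le K$ is in hand.
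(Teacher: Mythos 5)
Your proof is correct and follows essentially the same route as the paper's sketch: both rest on Lemma~\ref{th:lemma6} (2-exactness in the sense of $\tilde{\Pi}_h$), the stability bound of Lemma~\ref{th:stab1}, and the $O(h)$ estimate for $\tilde{\Pi}_h - \Pi_h$, combined by a triangle inequality. The only cosmetic difference is that the paper splits the error into three terms via the discrete solution started from $\tilde{\Pi}_h v_0$ and cites the $O(th^2)$ error of a 2-exact stable scheme as standard, whereas you unpack that standard step explicitly through the Duhamel formula and the Taylor-remainder bound on the defect.
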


The proof is standard, so we give its sketch only.

\begin{proof}
Let $\tilde{u}$ be the solution of \eqref{eq_02}--\eqref{eq_SD2} with the initial data $\tilde{u}(0) = \tilde{\Pi}_h v_0$. Then
\begin{equation*}
\begin{gathered}
\|\varepsilon_h(t,v_0)\| = \|u(t) - \Pi_h v(t, \ \cdot\ )\| \le
\\
\le
\|u(t) - \tilde{u}(t)\| + \|\tilde{u}(t) - \tilde{\Pi}_h v(t, \ \cdot\ )\| + \|\tilde{\Pi}_h v(t, \ \cdot\ ) - \Pi_h(t, \ \cdot\ )\|.
\\
\le 
K \|(\tilde{\Pi}_h - \Pi_h) v_0\| + 
\|\tilde{u}(t) - \tilde{\Pi}_h v(t, \ \cdot\ )\| +
\|(\tilde{\Pi}_h - \Pi_h) v(t, \ \cdot\ )\|.
\end{gathered}
\end{equation*}
The first and the third terms at the right-hand side are $O(h)$. The second term is the solution error by a 2-exact stable scheme and thus is $O(th^2)$.
\end{proof}


\section{Numerical results}

Now we apply SD-RT($p$) with $p = 1, 2, 3$ to the Cauchy problem for the transport equation \eqref{eq_TE} with the initial data $v_0(x,y) = \sin(2\pi(x+y))$ and the transport velocitiy $\B{\omega} = (\cos\phi, \sin\phi)$. We consider the cases $\phi=0$ and $\phi=\pi/8$ and use the right-triangular meshes described in Section~\ref{sect:regmesh}. On the boundaries of the unit square, the periodic boundary conditions are set. 

For the time integration, we use the 3-rd (for $p=1,2$) or the 4-th (for $p=3$) order Runge~-- Kutta method with the CFL number 0.1. So the error of the time integration is negligible.

For a triangle with vertices $\B{r}_1$, $\B{r}_2$, $\B{r}_3$, define the solution collocation points as $(i_1 \B{r}_1 + i_2 \B{r}_2 + i_3 \B{r}_3)/p$, $i_1, i_2, i_3 \in \mathbb{N} \cup \{0\}$, $i_1 + i_2 + i_3 = p$. For a given time, we measure the maximal difference between the numerical and the exact solution at the solution collocation points.

The numerical results at $t=0.1$ are shown in Fig.~\ref{fig:results}. We see the convergence with the order $p+1$ for $\phi = \pi/8$ and with the order $p$ for $\phi = 0$. This confirms the theoretical results proved for $p=1$. 

\begin{figure}[t]
\centering
\includegraphics[width=0.8\textwidth]{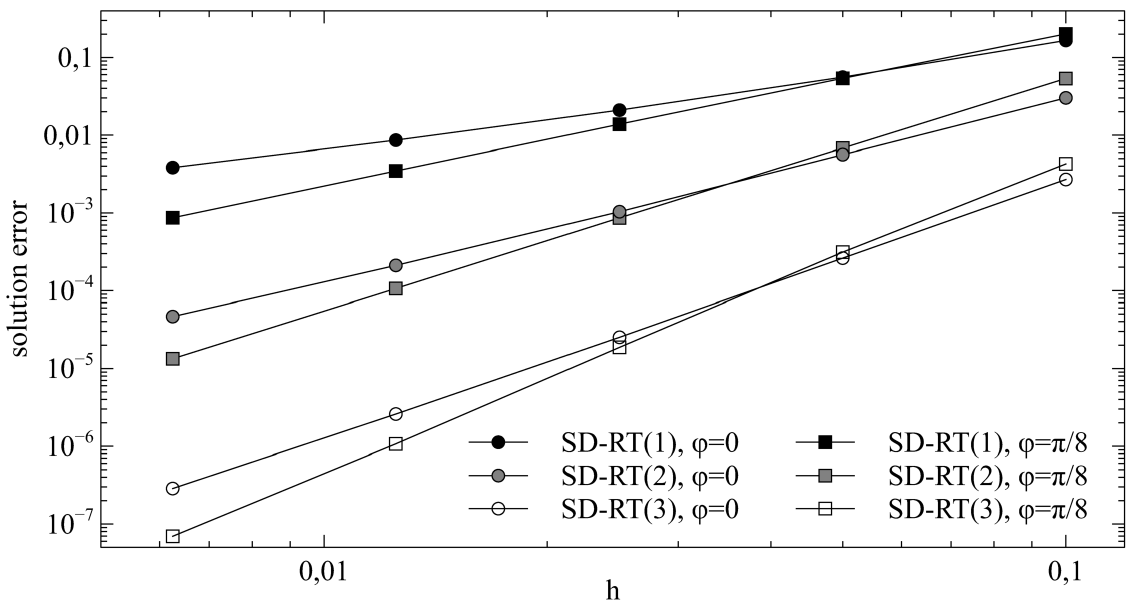}
\caption{The norm of the solution error at $t=0.1$}\label{fig:results}
\end{figure}

To study the long-time simulation accuracy, we plot the norm of the solution error as a function of time. The results are shown in Fig.~\ref{fig:results4} ($p=1$, $\phi=0$), Fig.~\ref{fig:results5} ($p=1$, $\phi=\pi/8$), Fig.~\ref{fig:results2} ($p=2$, $\phi=0$), Fig.~\ref{fig:results3} ($p=2$, $\phi=\pi/8$). Each line on each of these figures corresponds to a different $h$ ($h=0.1$, $h=0.05$, $h=0.025$, $h=0.0125$). We see that for a small time, the behavior of the solution error for $\phi=0$ differs from the case $\phi=\pi/8$. However, for $t \gg 1/h$ the distance between lines corresponding to different $h$ is identical for both cases and corresponds to the $(p+1)$-th order convergence.


\begin{figure}[t]
\centering
\includegraphics[width=0.9\textwidth]{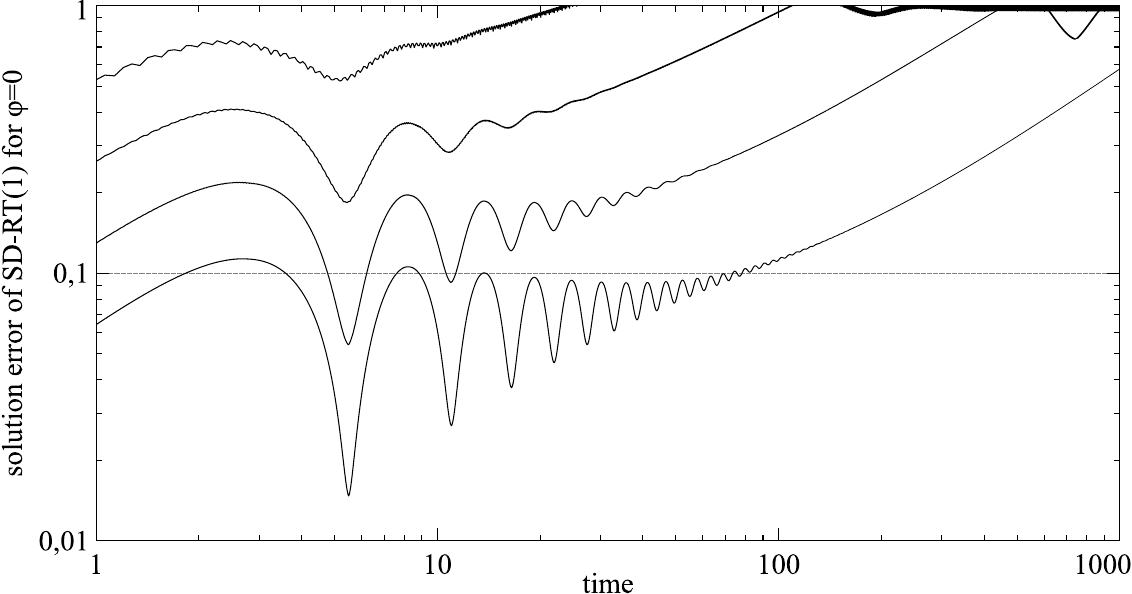}
\caption{The norm of the solution error for $p=1$, $\phi=0$}\label{fig:results4}
\end{figure}
\begin{figure}[t]
\centering
\includegraphics[width=0.9\textwidth]{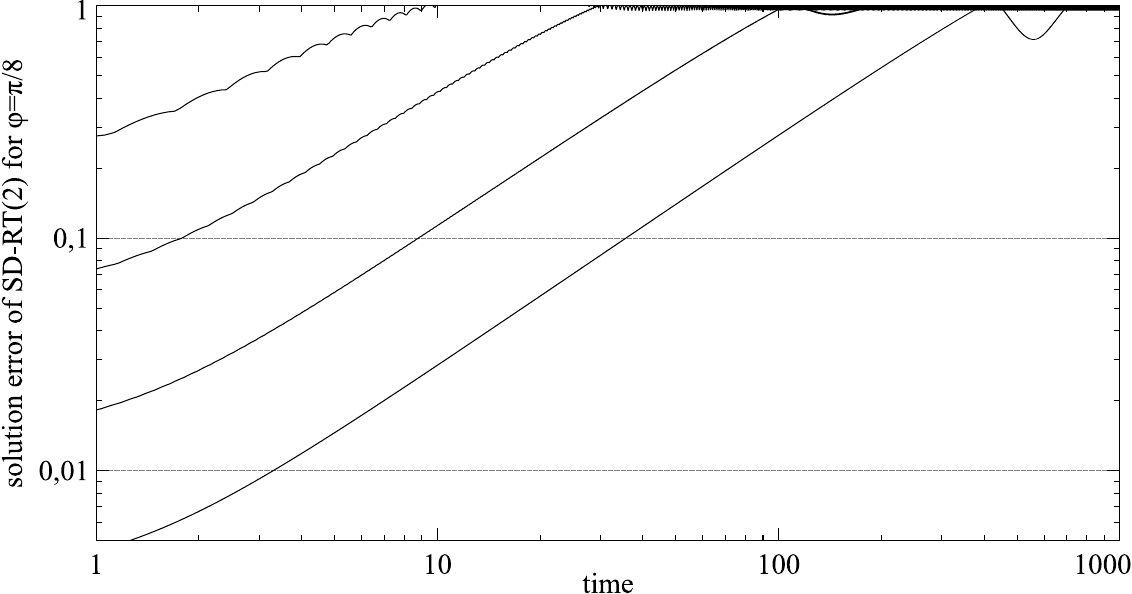}
\caption{The norm of the solution error for $p=1$, $\phi=\pi/8$}\label{fig:results5}
\end{figure}
\begin{figure}[t]
\centering
\includegraphics[width=0.9\textwidth]{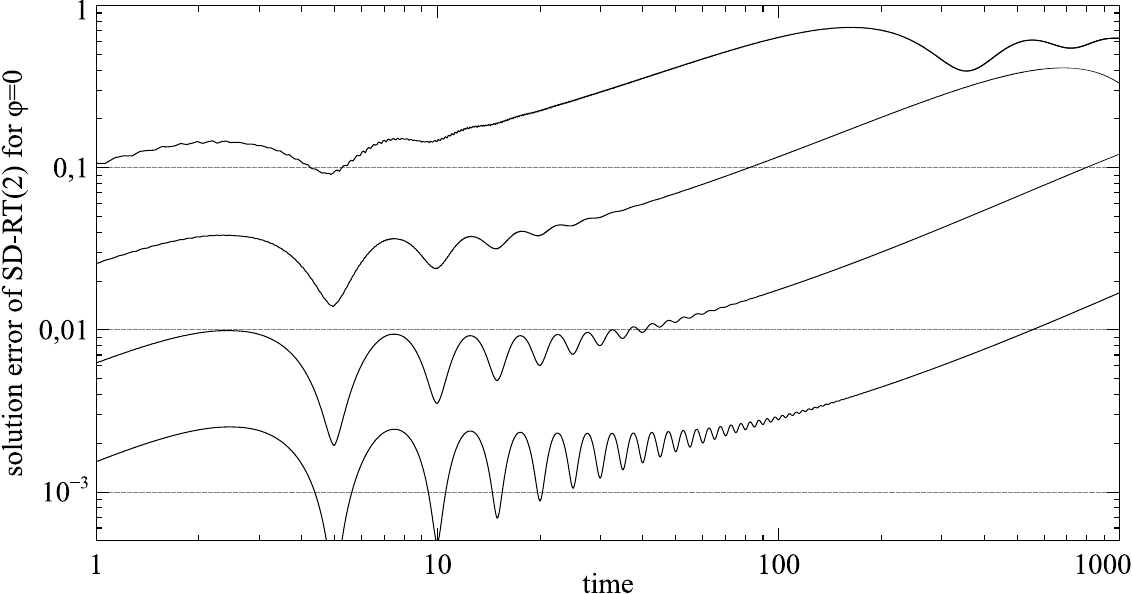}
\caption{The norm of the solution error for $p=2$, $\phi=0$}\label{fig:results2}
\end{figure}
\begin{figure}[t]
\centering
\includegraphics[width=0.9\textwidth]{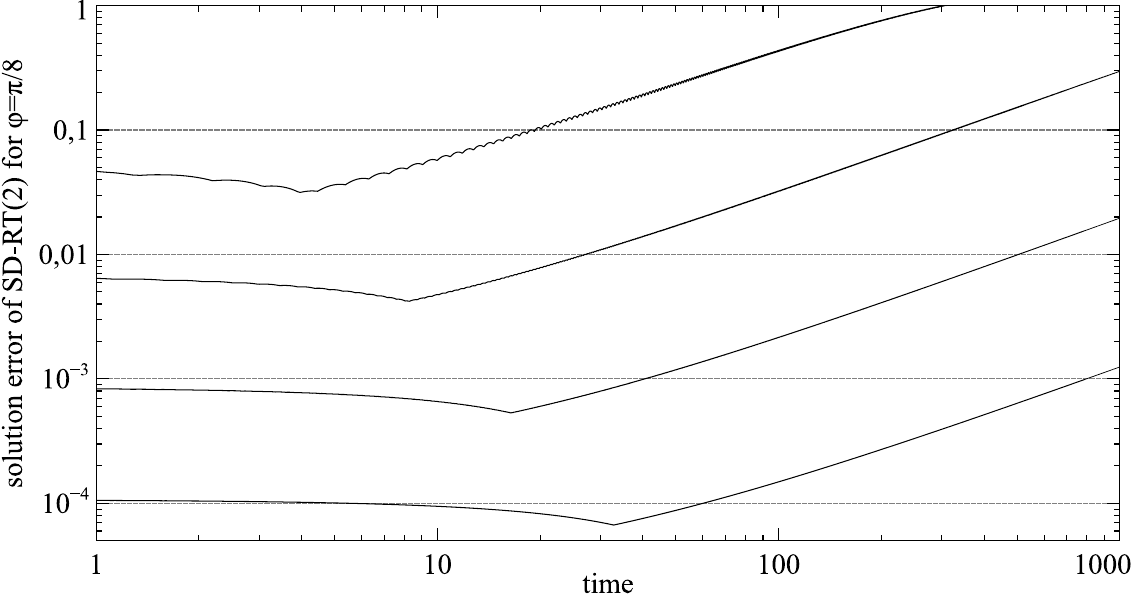}
\caption{The norm of the solution error for $p=2$, $\phi=\pi/8$}\label{fig:results3}
\end{figure}

\FloatBarrier

\appendix
\section{How to get the coefficients of $\tilde{\Pi}_h$ in Lemma~\ref{th:lemma6}}\label{sect:A}

Here we present the code for the Sagemath package that was used to find the coefficients.

First set the matrices $L_{(0,0)}$ and $L_{(-1,0)}$.

~\\
L0 = matrix([[3,1,1,0,0,0],[-3,1,-2,0,0,0],[0,1,4,0,0,0], \\~ [0,-1,-4,3,1,1],[0,2,2,-3,1,-2],[0,-4,-1,0,1,4]]) \\
Lm = matrix([[0,0,0,0,-1,-4],[0,0,0,0,2,2],[0,0,0,0,-4,-1], \\~ [0,0,0,0,0,0],[0,0,0,0,0,0],[0,0,0,0,0,0]]) \\

Now set $(\Pi_1 1)_0$, $(\Pi_1 x)_0$, $(\Pi_1 y)_0$, $(\Pi_1 xy)_0$, $(\Pi_1 x^2)_0$,  $(\Pi_1 y^2)_0$.

~\\
e = vector((1,1,1,1,1,1)); x = vector((0,1,0,0,1,1)); y = vector((0,0,1,1,1,0))\\
xy = vector((0,0,0,0,1,0)); x2 = x; y2 = y\\

Indroduce the diagonal matrices $M_x$, $M_y$, $M_{xx}$ with undetermined coefficients. 

~\\
var('c,d,b')\\
Mx = diagonal\_matrix([0,0,c,c,c,0])\\
My = diagonal\_matrix([0,0,d,d,d,0])\\
Mxx = diagonal\_matrix([b,0,0,b,0,0])\\

\noindent The form of $M_x$ and $M_y$ is defined by the 1-exactness of the scheme, and the addition of the diagonal matrix does not matter.

 Now set $(\tilde{\Pi}_1 1)_0$, $(\tilde{\Pi}_1 x)_0$, $(\tilde{\Pi}_1 y)_0$, $(\tilde{\Pi}_1 x^2)_0$, $(\tilde{\Pi}_1 xy)_0$, $(\tilde{\Pi}_1 y^2)_0$.

~\\
xp=x+Mx*e; yp=y+My*e; x2p = x2 + Mx*2*x + Mxx*2*e;\\
xyp = xy + Mx*y + My*x; y2p = y2 + My*2*y\\

Finally, write the truncation error on the quadratic polynomials in the sense of $\tilde{\Pi}_h$.

~\\
fxx = -2*xp + L0*x2p + Lm*(x2p-2*xp+e)\\
fxy = -yp + L0*xyp + Lm*(xyp-yp)\\
fyy = 0 + L0*y2p + Lm*y2p\\

The result is
$$
f^{xx} = (6b + 1, -6b - 1, -2c + 1, 6b + 1, -6b - 1, 2c - 1),
$$
$$
f^{xy} = (0, 0, -d - 1, 0, 0, d + 1), \quad f^{yy} = (0, 0, 0, 0, 0, 0).
$$
Equating $f^{xx}$ and $f^{xy}$ to zero we obtain the coefficients: $b=-1/6$, $c=1/2$, $d=-1$.


\begin{thebibliography}{10}
\expandafter\ifx\csname url\endcsname\relax
  \def\url#1{\texttt{#1}}\fi
\expandafter\ifx\csname urlprefix\endcsname\relax\def\urlprefix{URL }\fi
\expandafter\ifx\csname href\endcsname\relax
  \def\href#1#2{#2} \def\path#1{#1}\fi

\bibitem{Liu2006}
{Liu Y.}, {Vinokur M.}, {Wang Z.~J.}, Spectral difference method for
  unstructured grids {I}: basic formulation, Journal of Computational Physics
  216 (2006) 780--801.

\bibitem{SD1}
{Wang Z.~J.}, {Liu Y.}, {May G.}, {Jameson A.}, Spectral difference method for
  unstructured grids {II}: Extension to the {E}uler equations, Journal of
  Scientific Computing 32 (2007) 45--71.

\bibitem{SD2}
{Sun Y.}, {Wang Z.~J.}, {Liu Y.}, High-order multidomain spectral difference
  method for the {N}avier-{S}tokes equations on unstructured hexahedral grids,
  Communication in Computational Physics 2 (2007) 310--333.

\bibitem{SD5}
{Kuzmin M.}, Spectral difference method for the {E}uler equations on
  unstructured grids, Tech. Rep. WN-CFD-12-57, Centre Europ\'{e}en de Recherche
  et de Formation Avanc\'{e}e en Calcul Scientifique (2012).

\bibitem{SD3}
{Premasuthan S.}, {Liang C.}, {Jameson A.}, Computation of flows with shocks
  using spectral difference scheme with artificial viscosity, AIAA paper No
  2010-1449 (2007).

\bibitem{SD4}
{Premasuthan S.}, {Liang C.}, {Jameson A.}, Computation of flows with shocks
  using the spectral difference method with artificial viscosity, {II}:
  Modified formulation and local mesh refinement, Computers and Fluids 98
  (2014) 122--133.

\bibitem{SD6}
{Jameson A.}, A proof of the stability of the spectral difference method for
  all orders of accuracy, Journal of Scientific Computing 45 (2010).

\bibitem{Abelee2008}
{Van den Abelee K.}, {Lacor C.}, {Wang Z.-J.}, On the stability and accuracy of
  the spectral difference method, Journal of Scientific Computing 37 (2008)
  162--188.

\bibitem{SD8}
{Balan A.}, {May G.}, {Schoeberl J.}, A stable high-order spectral difference
  method for hyperbolic conservation laws on triangular elements, Journal of
  Computational Physics 231 (2012) 2359--2375.

\bibitem{Veilleux2021}
{Veilleux A.}, {Puigt G.}, {Hugues D.}, {Daviller G.}, A stable spectral
  difference approach for computations with triangular and hybrid grids up to
  the 6 order of accuracy, Journal of Computational Physics 449 (2021).

\bibitem{Veilleux2022}
{Veilleux A.}, {Puigt G.}, {Hugues D.}, {Daviller G.}, Stable spectral
  difference approach using {R}aviart-{T}homas elements for {3D} computations
  on tetrahedral grids, Journal of Scientific Computing 91 (2022).

\bibitem{SaezMischlich2023}
G. S\'{a}ez-Mischlich, J. Sierra-Aus\'{i}n, J. Gressier. The Spectral Difference Raviart--Thomas Method for Two and Three-Dimensional Elements and Its Connection with the Flux Reconstruction Formulation. Journal of Scientific Computing 93 (2022) 48. 
\newblock \href {http://arxiv.org/abs/2105.08632} {\path{arXiv:2105.08632}}.

\bibitem{Johnson1986}
{Johnson C.}, {Pitkar\"{a}nta J.}, An analysis of the discontinuous {G}alerkin
  method for a scalar hyperbolic equation, Mathematics of computation 46 (1986)
  1--26.

\bibitem{Richter1988}
{Richter G.~R.}, An optimal-order error estimate for the discontinuous
  {G}alerkin method, Mathematics of Computation 50 (1988) 75--88.

\bibitem{Peterson1991}
{Peterson T.~E.}, A note on the convergence of the discontinuous {G}alerkin
  method for a scalar hyperbolic equation, SIAM Journal of Numerical Analisys
  28 (1991) 133--140.

\bibitem{Diskin2011}
{Diskin B.}, {Thomas J.-L.}, Comparison of node-centered and cell-centered
  unstructured finite-volume discretizations: Inviscid fluxes, AIAA Journal
  49~(4) (2011) 836--854.

\bibitem{Bakhvalov2023b}
{Bakhvalov P.~A.}, {Surnachev M.~D.},
  \href{http://doi.org/10.1007/s42967-023-00292-8}{On the order of accuracy of
  edge-based schemes. {A} {P}eterson-type counter-example}, Communications on
  Applied Mathematics and Computation (2023).
\newline\urlprefix\url{http://doi.org/10.1007/s42967-023-00292-8}

\bibitem{Bakhvalov2023}
{Bakhvalov P.~A.}, {Surnachev M.~D.}, Linear schemes with several degrees of
  freedom for the transport equation and the long-time simulation accuracy, IMA
  Journal of Numerical Analysis 44 (2024) 297--396.

\bibitem{Sayas2013}
F.-J. Sayas, \href{https://arxiv.org/abs/1307.2491}{From Raviart-Thomas to HDG}
  (2013).
\newblock \href {http://arxiv.org/abs/1307.2491} {\path{arXiv:1307.2491}}.



\end{thebibliography}
\end{document}